\numberwithin{equation}{section} \setlength{\textwidth}{6.3in}
\newcommand{\Rb}{{\mathbb{R}}}
\newcommand{\C}{{\mathcal{C}}}
\newcommand{\LL}{{\mathcal{L}}}
\newcommand{\HH}{{\mathcal{H}}}
\newcommand{\M}{{\mathcal{M}}}
\def\rightharpoonupfill@{\arrowfill@\relbar\relbar\rightharpoonup}
\newcommand{\xrightharpoonup}[2][]{\ext@arrow
0359\rightharpoonupfill@{#1}{#2}} \makeatother
\newcommand{\res}{\mathop{\hbox{\vrule height 7pt width .5pt depth 0pt
\vrule height .5pt width 6pt depth 0pt}}\nolimits}
\def\weakstar{\buildrel\star\over\rightharpoonup}
\def\e{{\varepsilon}}
\def\O{{\Omega}}
\def\weak{\rightharpoonup}
\def\M{{\it M}}
\def\M{{\cal M}}
\newtheorem{Theorem}{Theorem}[section]
\newtheorem{Lemma}[Theorem]{Lemma}
\newtheorem{Proposition}[Theorem]{Proposition}
\newtheorem{Corollary}[Theorem]{Corollary}
\newtheorem{Remark}[Theorem]{Remark}
\newtheorem{Definition}[Theorem]{Definition}
\newcommand{{\rr}}{{\mathbb R}}
\newenvironment{@abssec}[1]{%
     \if@twocolumn
       \section*{#1}%
     \else
       \vspace{.05in}\footnotesize
       \parindent .2in
         {\upshape\bfseries #1. }\ignorespaces
     \fi}
     {\if@twocolumn\else\par\vspace{.1in}\fi}
\newcommand\keywordsname{Key words}
\newcommand\AMSname{AMS subject classifications}
\begin{document}

\title{Relaxation of certain integral functionals depending on strain and chemical composition}
\author{\textsc{Ana Margarida Ribeiro}\thanks{Departamento de Matem\'{a}tica and CMA, Faculdade de Ci\^encias e Tecnologia - Universidade
Nova de Lisboa, Quinta da Torre, 2829-516 Caparica, Portugal. E-mail: amfr@fct.unl.pt},\textsc{ Elvira Zappale}\thanks{D.I.E.I.I, Universita'
degli Studi di Salerno, Via Ponte Don Melillo, 84084 Fisciano (SA) Italy.
E-mail:ezappale@unisa.it}}
\maketitle

\begin{abstract}
We provide a relaxation result in $BV \times L^q$, $1\leq q < +\infty$ as a first step towards the analysis of thermochemical equilibria.

\noindent {\bf Keywords}: Relaxation, functions of bounded variation, quasiconvexity.

\noindent {\bf MSC2010 classification}: 49J45, 74F99.

\end{abstract}

\section{Introduction}

In this paper we consider energies depending on two vector fields with different behaviours: $u \in W^{1,1}(\Omega; \mathbb R^d)\cap L^p(\Omega;\mathbb R^d)$, $v \in L^q(\Omega;\mathbb R^m)$, $\Omega$ being a bounded open set of $\mathbb R^N$.
The functional $I:BV(\Omega;\mathbb{R}^d)\times L^q(\Omega;\mathbb{R}^m)\longrightarrow\mathbb{R}\cup\{+\infty\}$ that we consider is defined by
\begin{equation}\label{firstmodel}
I(u,v)=
\left\{ \begin{array}{ll}
\displaystyle{\int_\O W(x, u(x), \nabla u(x))\,dx + \int_\Omega \varphi(x,u(x),v(x))\,dx}\\
\\
\displaystyle{\quad \quad \quad\hbox{ if } (u,v)\in (W^{1,1}(\O;\mathbb{R}^d)\cap L^p(\Omega;\mathbb{R}^d))\times L^q(\O;\mathbb{R}^m),}\vspace{0.2cm}\\+\infty\quad\text{otherwise},\end{array}\right.
\end{equation}
where $W: \Omega \times \mathbb R^d \times \mathbb R^{d \times N} \to \mathbb{R}$ is a continuous function with linear growth from above and below in the gradient variable, $\varphi : \Omega \times \mathbb R^d \times \mathbb R^m\to \mathbb R$ is a  Carath\'eodory function (that is $\varphi(\cdot,u,v)$ is measurable for all $(u,v)\in \mathbb R^d \times \mathbb R^m$ and $\varphi(x,\cdot,\cdot)$ is continuous for a.e. $x\in\O$), with growth $p$ and $q$ respectively in the variables $u$ and $v$.

Our results can be considered as a first step towards the analysis of functionals of the type \hfill

\noindent $\int_\Omega V(x,u,\nabla u, v)dx$, which generalizes those considered by \cite{Fonseca-Kinderlehrer-Pedregal_1},\cite{Fonseca-Kinderlehrer-Pedregal_2}  and \cite{CRZ2}, to deal with equilibria for systems depending on elastic strain and chemical composition. In this context a multiphase alloy is represented by the set $\Omega$, the deformation gradient is given by $\nabla u$, and $v$ denotes the chemical composition of the system.

In \cite{Fonseca-Kinderlehrer-Pedregal_1}, $V \equiv V(\nabla u, v)$ is a cross-quasiconvex function, while in our decoupled model we also take into account heterogeneities and the deformation without imposing any convexity restriction neither on $W$ nor on $\varphi$. Moreover when $\varphi\equiv 0$, the functional in \eqref{firstmodel} recovers the one in
\cite{Fonseca-Muller-relaxation} without quasiconvexity assumptions.

Additive models like the one we are addressing can also be found in imaging models, like those considered in \cite{AAB-FC1, AAB-FC2, AK}, i.e.
\begin{equation}\label{AAB-FC}
\displaystyle{\inf_{u ,v}\left\{|Du|(\O) + \frac{1}{2 \lambda} \|\phi- u - v\|^2_{L^2}\right\}}
\end{equation}
where $\phi$ is a given image and $\lambda$ a scaling factor for the $L^2$ norm of the fidelity term $\phi-(u+v)$.

In order to deal with the minimization of \eqref{firstmodel}, since there may be a lack of lower semicontinuity, it is necessary to pass to the relaxed functional defined in $BV(\Omega;\mathbb{R}^d)\times L^q(\Omega;\mathbb{R}^m)$
\begin{equation}\label{relaxedfunctional1}
\overline{I}(u,v):= \inf \left\{\liminf_{n \to + \infty}I(u_n,v_n): (u_n,v_n) \in BV(\O;\mathbb R^d)\times L^q(\O; \mathbb R^m): u_n \to u \hbox{ in } L^1, v_n \weak v  \hbox{ in } L^q \right\},
\end{equation}
and prove a representation result for $\overline{I}$.

It is worthwhile to remark that for $q=1$, the functional $\overline{I}$ may fail to be sequentially lower semicontinuous. However, as we will observe below, this can be achieved provided that $\varphi$ is uniform continuous, cf. \eqref{uniformcontinuityvarphi}.

We prove the following theorem.
\begin{Theorem}\label{thmdec1}
Let $p\geq 1$ and $q \geq 1$ and let $\Omega \subset \mathbb R^N$ be a bounded open set. Assume that $W:\Omega \times \mathbb R^d \times \mathbb R^{d \times N} \to \mathbb R$ is a continuous function, satisfying
\begin{itemize}
\item [(i)] $\displaystyle\exists \ C>0:\ \frac{1}{C}|\xi|-{C}\leq W(x,u,\xi)\leq C(1+ |\xi|),\ \forall\ (x,u,\xi) \in \Omega \times \mathbb R^d \times \mathbb R^{d \times N};$

\item [(ii)] for every compact subset $K$ of $\Omega \times \mathbb R^d$ there is a  continuous function $\omega_K:[0,+\infty)\rightarrow \mathbb{R}$ with $\omega_K(0)=0$, and such that
$$\displaystyle{\left|W(x,u,\xi)-W(x',u',\xi)\right|\leq \omega_K(|x-x'|+|u-u'|)(1+ |\xi|),\ \forall\ (x,u,\xi), (x',u',\xi) \in K\times \mathbb R^{d \times N};}$$

\item [(iii)] for every $x_0 \in \O$  and for every $\e>0$ there exists $\delta>0$ such that
$$|x-x_0| < \delta\ \Rightarrow\  W(x,u,\xi)-W(x_0,u,\xi) \geq -\e(1+ |\xi|),\ \forall\ (u,\xi)\in \mathbb R^d \times \mathbb R^{d \times N};$$

\item [(iv)] there exist $\alpha \in (0,1),$ and $C,L>0$ such that
$$ t|\xi|>L\ \Rightarrow\ \left|W^\infty(x,u,\xi)- \frac{W(x,u, t \xi)}{t}\right|\leq C \frac{|\xi|^{1-\alpha}}{t^\alpha},\ \forall\ (x,u,\xi) \in \Omega \times \mathbb R^d \times \mathbb R^{d \times N}, t \in \mathbb R.$$
\end{itemize}

Moreover let $\varphi:\Omega \times \mathbb R^d \times \mathbb R^m\to \mathbb R$ be a Carath\'eodory function, satisfying
\begin{itemize}
\item [(v)] $\displaystyle\exists\ C>0: \; \frac{1}{C}(|u|^p+|v|^q)-C\le \varphi(x,u,v)\le C(1+|u|^p+|v|^q),$  $\forall\ (x,u,v) \in \Omega\times\mathbb R^d \times \mathbb R^m$.
\end{itemize}

If $I$ is defined by \eqref{firstmodel} and $\overline{I}$ is defined by \eqref{relaxedfunctional1} then, for every $u \in BV(\Omega; \mathbb R^d)\cap L^p(\Omega;\mathbb R^d)$ and $v \in L^q(\Omega;\mathbb R^m)$, the following identity holds:
\begin{equation}\label{representation1}
\begin{array}{rcl}\overline{I}(u,v) & = &\displaystyle\int_\Omega QW(x,u(x),\nabla u(x))\,dx + \int_{J_u}\gamma(x,u^+, u^-, \nu_u)\,d {\cal H}^{N-1}+  \vspace{0.2cm}\\& & \displaystyle +\int_\O (QW)^\infty \left(x,u(x), \frac{d D^c u}{d |D^c u|}\right)d |D^c u|+\int_{\Omega} C\varphi(x,u(x),v(x))\,dx.\end{array}
\end{equation}
\end{Theorem}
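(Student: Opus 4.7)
My plan is to exploit the decoupled structure of $I$: the gradient part $\int W(x,u,\nabla u)\,dx$ and the composition part $\int \varphi(x,u,v)\,dx$ are affected by essentially independent modes of convergence (strong $L^{1}$ for $u_n$, weak $L^{q}$ for $v_n$) and contain no cross dependence on $\nabla u$. I would therefore relax each piece by the relevant classical result and then glue. For the $W$-part, the target representation with $QW$, the surface density $\gamma$, and $(QW)^{\infty}$ on the Cantor part is precisely the conclusion of the Fonseca--M\"uller relaxation in $BV$ (\cite{Fonseca-Muller-relaxation}), and the hypotheses (i)--(iv) are exactly the ones required there. For the $\varphi$-part, since the coupling with $u$ is independent of $\nabla u$, this is the standard weak-$L^{q}$ relaxation with $u$ frozen, yielding the convex envelope $C\varphi(x,u,v)$ in the variable $v$.

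\textbf{Lower bound.} Given a competitor $(u_n,v_n)$ with finite $\liminf I$, coercivity in (i) and (v) yields boundedness of $\{u_n\}$ in $BV\cap L^{p}$ and of $\{v_n\}$ in $L^{q}$; passing to a subsequence, $u_n\to u$ a.e.\ Applying the Fonseca--M\"uller lower-semicontinuity theorem to $\int W(x,u_n,\nabla u_n)\,dx$ produces the three gradient-type terms on the right-hand side of \eqref{representation1}. For the composition term I fix $\eta>0$ and use Egoroff together with the Carath\'eodory property to locate $K_{\eta}\subset\Omega$ with $|\Omega\setminus K_{\eta}|<\eta$ on which $\varphi(\cdot,u_n,\cdot)\to\varphi(\cdot,u,\cdot)$ uniformly on bounded sets of the last argument. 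Then the classical convex-integrand lower-semicontinuity under weak $L^{q}$ convergence gives $\liminf\int_{K_{\eta}}\varphi(x,u_n,v_n)\,dx\ge\int_{K_{\eta}}C\varphi(x,u,v)\,dx$. Controlling the remainder via (v) and sending $\eta\to 0$ finishes the lower bound.

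\textbf{Upper bound and main obstacle.} For the upper bound I first invoke the Fonseca--M\"uller representation in the constructive direction to obtain $\widetilde u_n\in W^{1,1}\cap L^{p}$ with $\widetilde u_n\to u$ in $L^{1}$, bounded in $L^{p}$ after a truncation at a level growing with $n$, and whose $W$-energies converge to the three gradient terms. For each such $\widetilde u_n$, the standard convex recovery (Young-measure construction) produces $v_{n,k}\rightharpoonup v$ in $L^{q}$ with $\int\varphi(x,\widetilde u_n,v_{n,k})\,dx\to\int C\varphi(x,\widetilde u_n,v)\,dx$; a diagonal extraction (using metrizability of bounded sets in $L^{q}$ for the weak topology when $q>1$) then yields $v_n:=v_{n,k(n)}$ with $v_n\rightharpoonup v$ and $\int\varphi(x,\widetilde u_n,v_n)\,dx\to\int C\varphi(x,u,v)\,dx$, the final passage of $\widetilde u_n\to u$ through $\varphi$ being justified by continuity in $u$ and the $p$-growth bound on a.e.-convergent, $L^{p}$-bounded subsequences. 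The main difficulty is precisely this coupling: $u_n$ appears inside $\varphi$ but converges only in $L^{1}$, while $\varphi$ has $p$-growth in $u$, so no one-field relaxation theorem applies verbatim. The workaround is the combination of a.e.\ convergence plus Egoroff plus Carath\'eodory in the lower bound, and truncation plus diagonalization in the upper bound. The remark on $q=1$ needing uniform continuity reflects the well-known pathology that weak $L^{1}$ convergence is closed only for equi-integrable families, so without such an assumption the convex envelope is not the sharp lower bound.
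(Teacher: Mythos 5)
Your overall strategy — decouple the two terms, invoke the Fonseca--M\"uller $BV$-relaxation for the $W$-part, and relax the $\varphi$-part by passing to $C\varphi$ via a diagonal argument — matches the paper's. However there are three genuine gaps.

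First, you assert that ``the hypotheses (i)--(iv) are exactly the ones required'' by Fonseca--M\"uller and that one may invoke their theorem ``in the constructive direction'' to obtain a recovery sequence for $\int W(x,u_n,\nabla u_n)\,dx$. This is not correct: the Fonseca--M\"uller result is stated for \emph{quasiconvex} integrands, and (i)--(iv) do not impose quasiconvexity. Applying it directly to $W$ would produce a sequence recovering $\int W$ itself on the absolutely continuous part, not $\int QW$. The paper bridges this gap with Lemma \ref{FMnonquasiconvex} (and Corollary \ref{Corollary3.2}): one first proves $\overline{I_{W}}=\overline{I_{QW}}$ via the classical Sobolev relaxation (Dacorogna, Thm.\ 9.8) and a diagonal argument, and then applies Fonseca--M\"uller to $QW$ after verifying (via Propositions \ref{measQW}--\ref{propperH5}) that $QW$ inherits (H1)--(H5). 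Your proof never makes this reduction, so the upper bound for non-quasiconvex $W$ is unjustified.

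Second, your treatment of the upper bound glosses over the need for $L^\infty$ control of the recovery sequence in $u$. After producing $\widetilde u_n$ and the diagonal $v_n$, you want $\int C\varphi(x,\widetilde u_n,v)\,dx\to\int C\varphi(x,u,v)\,dx$ using ``continuity in $u$ and the $p$-growth bound on a.e.-convergent, $L^p$-bounded subsequences.'' But $L^p$-boundedness alone does not give an integrable dominant for $C\varphi(x,\widetilde u_n,v)\le C(1+|\widetilde u_n|^p+|v|^q)$. The paper instead argues in two cases: for $u\in L^\infty$ a careful logarithmic truncation (choosing the level $i(n)$ to make the extra gradient energy of order $1/M$) yields $\overline u_n$ uniformly bounded in $L^\infty$, and then the reverse Fatou lemma applies; for general $u\in BV\cap L^p$ one passes to $\phi_n(u)\in L^\infty$ and uses the sequential lower semicontinuity of $\overline I$ together with upper semicontinuity of $\gamma$ and monotone control of the Cantor term.

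Third, the case $q=1$ is not handled. Your diagonal extraction uses metrizability of bounded sets for the weak-$L^q$ topology, which fails for $q=1$, and your closing remark (``without such an assumption the convex envelope is not the sharp lower bound'') is a misreading: the representation formula \eqref{representation1} does hold for $q=1$ as stated — what can fail without the extra uniform-continuity hypothesis \eqref{uniformcontinuityvarphi} is the lower semicontinuity of the right-hand side as a functional, not the identity itself. The paper proves the $q=1$ upper bound by perturbing the energy with $\varepsilon\,\theta(|v|)$ for a superlinear $\theta$ (restoring weak-$L^1$ compactness of the recovery sequences via de la Vall\'ee-Poussin/Dunford--Pettis), obtaining \eqref{Lebmonotoneupperb}, and then passing $\varepsilon\to 0$ by monotone convergence using $C(\varphi+\varepsilon\theta)\downarrow C\varphi$, while the lower bound uses the Braides--Fonseca--Leoni $\mathcal A$-quasiconvexity result. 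None of these ingredients appear in your outline.

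Finally, a minor point: for the lower bound the paper does not use an Egoroff argument; it observes that coercivity of $\varphi$ forces $u_n\to u$ in $L^p$ and then invokes lower semicontinuity of $\int C\varphi$ with respect to $L^p_{\rm strong}\times L^q_{\rm weak}$ convergence, citing [DM-F-L-M, Lemma 4.3] for the nontrivial fact that $C\varphi(x,\cdot,\cdot)$ is lower semicontinuous when $q>1$ (a Carath\'eodory $\varphi$ alone does not guarantee this for $C\varphi$). Your Egoroff route would still need this lower-semicontinuity lemma to close, and as written it does not explain how the varying $u_n$ inside a non-convex $\varphi$ is reconciled with the merely weak convergence of $v_n$.
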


\begin{Remark}\label{firstremark}\smallskip

\noindent\begin{itemize}
\item[i)] An example of an integrand $W$ satisfying the assumptions of Theorem \ref{thmdec1} is given, by $W(x,u,F):= f(x)h(u)g(F)$, where $f:\Omega \subset \mathbb R^2 \to \mathbb R$ and $h:\mathbb R^2 \to \mathbb R$ are continuous bounded functions, bounded from below by a strictly positive constant, $g:\mathbb R^{2 \times 2}\to \mathbb R$ where $g(F):= |F_{11}-F_{22}|+ |F_{12}+ F_{21}|+ \min\{|F_{11}+F_{22}|, |F_{12}-F_{21}|\}$ is the function in \cite[Example 2.18]{Fonseca-Muller-relaxation}, which is not quasiconvex. For what concerns $\varphi$ we can take $\varphi(x,u,v)\equiv f(x) (|u|^p + g_1(v))$, with $f$ as above and  $g_1:\mathbb R^m \to \mathbb R$  any double well function with the required growth, as for example $g_1(v)=(|v|-1)^p$.

\item[ii)] In order to describe the right hand side of (\ref{representation1}) we recall that for every $x \in \Omega$, $QW(x,u,\cdot)$ stands for the quasiconvexification of $W$, cf. (\ref{qcx-rel}), while $(QW)^\infty$ denotes the recession function of $QW$ with respect to the last variable as introduced in Definition \ref{recessiondef}, and $\gamma$ stands for the surface integral density, defined in \eqref{gammaAFP}.  Finally for every $(x, u) \in \Omega \times \mathbb R^d$,  $C\varphi$ stands for the convex envelope (or convexification) of $\varphi(x,u, \cdot)$, namely
\begin{equation}\label{cx-rel}
\displaystyle{C\varphi(x,u,\cdot):= \sup\{g:\mathbb R^m \to \mathbb R: g \hbox{ convex, } g(v)\leq \varphi(x,u,v)\; \forall \,v\}.}
\end{equation}

Classical results in Calculus of Variations ensure that, if $\varphi$ takes only finite values then $C\varphi$ coincides with the bidual of $\varphi$, $\varphi^{\ast \ast}$,  whose characterization is given below
\begin{equation}\label{convexification1}
\varphi^{\ast \ast}(x,u,\cdot):= \sup \{ g: \mathbb R^m \to \mathbb R: g \hbox{ convex and lower semicontinuous},\  g(v)\leq \varphi(x,u,v) \; \forall \, v\}.
\end{equation}

\item[iii)] We observe that if $\varphi \equiv 0$ our results extends \cite[Theorem 2.16]{Fonseca-Muller-relaxation} (see also \cite[Theorem 5.54]{AFP}) to non quasiconvex functions. We stress the fact that our hypotheses are made on the non quasiconvex function $W$ and thus we can't immediately apply the results in \cite{Fonseca-Muller-relaxation} to $Q W$.
\end{itemize}
\end{Remark}

\begin{Remark}\label{mainobservations}

\smallskip
\noindent\begin{itemize}
\item We observe that in the Sobolev setting, Theorem \ref{thmdec1} can be proven without coercivity assumptions on $\varphi$, indeed let $f:\Omega\times\mathbb{R}^{d}\times\mathbb{R}^{d\times N}\times
\mathbb{R}^{m}\rightarrow\mathbb{R}$ be a Carath\'{e}odory function satisfying
$$
0\leq f\left(  x,u,\xi,v\right)  \leq C\left(  1+\left\vert u\right\vert
^{p}+\left\vert \xi\right\vert ^{p}+\left\vert v\right\vert ^{q}\right)
$$
for a.e. $x\in\Omega,$ for every $\left(  u, \xi,v\right)  \in\mathbb{R}%
^{d}\times\mathbb{R}^{d\times N}\times\mathbb{R}^{m}$ and for some $C>0.$
Consider for every $1\leq p, q<+\infty$   the following relaxed localized energy

\begin{equation}\label{calFCRZ2}
\begin{array}{ll}
\displaystyle{\mathcal{F}\left(  u,v;A\right) :=}\\
\\
 \quad \displaystyle{\inf\left\{  \underset{n\rightarrow\infty
}{\lim\inf}\int_{A}f\left(  x,u_{n}\left(  x\right)  ,\nabla u_{n}\left(
x\right)  ,v_{n}\left(  x\right)  \right)  dx:u_{n}\rightharpoonup u\text{ in
}W^{1,p}\left(  A;\mathbb{R}^{d}\right)  ,~v_{n}\rightharpoonup v\text{ in
}L^{q}\left(  A;\mathbb{R}^{m}\right)  \right\}  .}
\end{array}
\end{equation}
Then, in \cite[Theorem 1.1]{CRZ2}(cf. also \cite{CRZ}) it has been proven that, for every $u\in W^{1,p}\left(  \Omega;\mathbb{R}^{d}\right)  ,$ $v\in
L^{q}\left(  \Omega;\mathbb{R}^{m}\right)  $ and $A\in\mathcal{A}\left(
\Omega\right),$
\[
\mathcal{F}\left(  u,v;A\right)  =\int_{A}QCf\left(  x,u\left(  x\right)
,\nabla u\left(  x\right)  ,v\left(  x\right)  \right)  dx,
\]
where $QCf$ stands for the quasiconvex-convex envelope of $f$ with respect to
the last two variables, namely
\begin{equation}
\begin{array}{ll}
\displaystyle{QCf(x,u,\xi,v)=}\\
\\
\displaystyle{\inf\left\{  \frac{1}{|D|}\int_{D}f(x,u,\xi+\nabla\varphi
(y),v+\eta(y))\,dy:\ \varphi\in W_{0}^{1,\infty}(D;\mathbb{R}^{d}),\eta\in
L^{\infty}(D;\mathbb{R}^{m}),\int_{D}\eta(y)\,dy=0\right\},}
\label{qcx-cx-rel}%
\end{array}
\end{equation}
$D$ being any bounded open set. Clearly this equality recovers our setting, since it suffices to define for every $(x,u,\xi,v)\in \Omega \times \mathbb R^d \times \mathbb R^{d \times N}\times \mathbb R^m$,
$ f(x,u,\xi,v):= W(x,u,\xi) + \varphi(x,u,v)$. In fact it is easily seen that if $f$ satisfies the above growth assumptions, then
$$
QCf(x,u,\xi,v)= QW(x,u,\xi)+ C\varphi(x,u,v).
$$

\item We notice that contrary to what one would expect from \cite{Fonseca-Muller-relaxation} and \cite{Fonseca-Kinderlehrer-Pedregal_2}, our density $\varphi$ does not need to satisfy a property analogous to $ii)$ in Theorem \ref{thmdec1} with respect to $(x,u,v)$, indeed it is just a Carath\'eodory function.

\item We emphasize that the arguments adopted to prove the previous theorem strongly rely on the fact that the energy densities are decoupled.
In particular, in the case $q=1$, we will approximate the functional $I$ by adding an extra term with superlinear growth at $\infty$ in the $v$ variable. This will ensure the sequentially weak lower semicontinuity of the relaxed approximating functional $$\begin{array}{l}\displaystyle\overline{I_\varepsilon}(u,v):=\inf\left\{\liminf_n \int_\Omega W(x,u_n,\nabla u_n)\,dx+ \int_\Omega(\varphi(x,u,v)+ \varepsilon \theta(|v|))\,dx:\right.\vspace{0.2cm}\\ \displaystyle\hspace{2.5cm}\left. u_n \to u \hbox{ in }L^1, v_n \rightharpoonup v \hbox{ in }L^1\right\},\end{array}$$ allowing us to adopt arguments similar to those exploited in the proof for the case $q>1$.
These techniques are well suited for the convex setting but we are not aware if a similar procedure is possible in the quasiconvex-convex framework.
\end{itemize}
\end{Remark}

Having in mind the continuous embedding of $BV(\Omega;\mathbb R^d)$ in $L^\frac{N}{N-1}(\Omega;\mathbb R^d)$ (assuming $\Omega \subset \mathbb R^N$), we can obtain, in an easier way, the relaxation result as above.
%for models whose energy we describe next.
%Let $1\leq p \leq \frac{N}{N-1}$ and $q\geq1$, assume that $W:\Omega \times \mathbb R^n %\times \mathbb R^{n \times N} \to \mathbb R$ satisfies the same assumptions as in Theorem %\ref{thmdec1} and $\varphi:\Omega \times \mathbb R^n \times \mathbb R^m \to \mathbb R$ is %a Carath\'eodory function satisfying growth condition of order $p$ from above and below in the %variable $v$ and of order $q$ on $u$ from above.
Indeed we can prove the following result.
%\begin{equation}\label{Jbardec}
%\displaystyle{\overline{J}(u,v):=\inf\left\{\liminf_{n \to +\infty} J(u_n,v_n): (u_n, v_n)\in %BV(\Omega;\mathbb R^d)\times L^q(\Omega;\mathbb R^m): u_n \to u \hbox{ in }L^1, v_n %\rightharpoonup v \hbox{ in }L^q\right\},}
%\end{equation}
%proving the following theorem.

\begin{Theorem}\label{thmdec2}
Let $\Omega \subset \mathbb R^N$ be a bounded open set, and let $1\leq p\le \frac{N}{N-1}$ and $q\geq1$. Let $W:\Omega \times \mathbb R^d \times \mathbb R^{d \times N} \to \mathbb R$ be a continuous function satisfying $(i) \div (iv)$ of Theorem \ref{thmdec1}.
Moreover let $\varphi:\Omega \times \mathbb R^d \times \mathbb R^m \to \mathbb R$ be a Carath\'eodory function satisfying $(v)$ of Theorem \ref{thmdec1} in the weaker form
\begin{equation}\label{varphigrowth2}\exists\ C>0:\ \displaystyle{ \frac{1}{C}|v|^q-C \leq \varphi(x,u,v)\leq C(1+ |u|^p+ |v|^q),\ \forall\ (x,u,v)\in \Omega \times \mathbb R^d \times \mathbb R^m.}
\end{equation}

Then, for every $(u,v) \in BV(\Omega;\mathbb R^d) \times L^q(\Omega;\mathbb R^m),$ \eqref{representation1} holds.

%\begin{equation}\label{Jdecrep2}
%\begin{array}{rcl}\overline{J}(u,v) & = &\displaystyle\int_\Omega QW(x,u(x),\nabla u(x))\,dx + %\int_{J_u}\gamma(x,u^+(x), u^-(x), \nu_u(x))\,d {\cal H}^{N-1}(x)+  \vspace{0.2cm}\\& & %\displaystyle +\int_\O (QW)^\infty \left(x,u(x), \frac{d D^c u}{d |D^c u|}(x)\right)d |D^c u|%(x)+\int_{\Omega} C\varphi(x,u(x),v(x))\,dx.\end{array}
%\end{equation}
\end{Theorem}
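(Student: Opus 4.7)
The plan is to reduce Theorem~\ref{thmdec2} to Theorem~\ref{thmdec1} by a perturbation-and-limit argument that exploits the continuous embedding $BV(\Omega;\mathbb{R}^d)\hookrightarrow L^{N/(N-1)}(\Omega;\mathbb{R}^d)$: under the hypothesis $p\le N/(N-1)$, this forces $BV(\Omega;\mathbb{R}^d)\subset L^p(\Omega;\mathbb{R}^d)$, so the missing coercivity of $\varphi$ in the $u$-variable can be supplied, along approximating sequences, by the Sobolev-type bound coming from the linear growth of $W$.

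For each $M>0$, I set $\varphi_M(x,u,v):=\varphi(x,u,v)+\tfrac{1}{M}|u|^p$. Then $\varphi_M$ is Carath\'eodory and satisfies condition $(v)$ of Theorem~\ref{thmdec1} with constants depending on $M$, and since $|u|^p$ is independent of $v$ one has $C\varphi_M(x,u,v)=C\varphi(x,u,v)+\tfrac{1}{M}|u|^p$. Let $I_M$ denote the functional built from $W$ and $\varphi_M$ and $\overline{I_M}$ its relaxation; Theorem~\ref{thmdec1} applied to $I_M$ gives, for every $(u,v)\in BV(\Omega;\mathbb{R}^d)\times L^q(\Omega;\mathbb{R}^m)$,
\[
\overline{I_M}(u,v)\;=\;J(u,v)\;+\;\tfrac{1}{M}\int_\Omega |u|^p\,dx,
\]
where $J(u,v)$ denotes the right-hand side of \eqref{representation1}.

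The crux is to show $\overline{I_M}(u,v)\to \overline{I}(u,v)$ as $M\to\infty$. The inequality $\overline{I_M}\ge\overline{I}$ is immediate from $I_M\ge I$. For the reverse, assuming $\overline{I}(u,v)<+\infty$ and given $\varepsilon>0$, I pick an admissible sequence $(u_n,v_n)$ with $I(u_n,v_n)\le\overline{I}(u,v)+\varepsilon$ along a subsequence. Coercivity $(i)$ of $W$ and the $v$-coercivity in \eqref{varphigrowth2} together bound $\int_\Omega|\nabla u_n|\,dx$, and combined with $u_n\to u$ in $L^1$ this yields uniform boundedness of $\{u_n\}$ in $BV$, hence in $L^{N/(N-1)}$ by the Sobolev embedding, and therefore in $L^p$ (since $\Omega$ is bounded and $p\le N/(N-1)$), say with bound $K=K(\varepsilon)$. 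Consequently
\[
\overline{I_M}(u,v)\;\le\;\liminf_n I_M(u_n,v_n)\;\le\;\overline{I}(u,v)+\varepsilon+\frac{K(\varepsilon)}{M},
\]
and letting first $M\to\infty$ and then $\varepsilon\to 0$ closes the gap.

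Since $u\in BV(\Omega;\mathbb{R}^d)\subset L^p(\Omega;\mathbb{R}^d)$ the perturbation term $\tfrac{1}{M}\int_\Omega|u|^p\,dx$ tends to $0$, so passing to the limit in the displayed formula for $\overline{I_M}$ yields \eqref{representation1}. The main obstacle is precisely the uniform $L^p$-control of approximating sequences in the absence of $u$-coercivity for $\varphi$: the dimensional restriction $p\le N/(N-1)$ enters to convert the $BV$-bound produced by the linear growth of $W$ into the required $L^p$-bound through the Sobolev embedding, which is what makes the perturbation $\tfrac{1}{M}|u|^p$ asymptotically negligible along admissible sequences.
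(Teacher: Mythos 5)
Your argument is correct, and it takes a genuinely different route from the paper. The paper's proof reruns the construction from the proof of Theorem~\ref{thmdec1}: it invokes Corollary~\ref{Corollary3.2} to reduce to quasiconvex $W$, repeats the lower-bound argument verbatim, and for the upper bound takes the Fonseca--M\"{u}ller recovery sequence $u_n$ directly, observing that the coercivity of $W$ yields a uniform $BV$ bound and hence, via the embedding $BV(\Omega;\mathbb{R}^d)\hookrightarrow L^{N/(N-1)}(\Omega;\mathbb{R}^d)$ together with $p\le N/(N-1)$, a uniform $L^p$ bound on $u_n$; this makes the truncation step of Theorem~\ref{thmdec1} unnecessary and lets the diagonalization that builds $v_n$ go through unchanged (with the $\varepsilon\theta$ perturbation and Dunford--Pettis when $q=1$). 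You instead treat Theorem~\ref{thmdec1} as a black box: you restore the missing $u$-coercivity by passing to $\varphi_M=\varphi+\tfrac{1}{M}|u|^p$, apply the known representation to the perturbed functional (noting $C\varphi_M=C\varphi+\tfrac{1}{M}|u|^p$ because the convexification is in $v$ only), and send $M\to\infty$, using the same Sobolev embedding to bound $\|u_n\|_{L^p}$ along near-optimal sequences for $\overline{I}$ so that the extra term $\tfrac{1}{M}\int|u_n|^p$ is negligible. Your reduction is more modular and avoids repeating the recovery-sequence construction and the $q=1$ machinery, at the cost of the extra $M\to\infty$, $\varepsilon\to 0$ double limit; both approaches hinge on exactly the same use of $BV\hookrightarrow L^{N/(N-1)}$. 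One point worth making explicit in your write-up is the case $\overline{I}(u,v)=+\infty$: it is covered because $\overline{I_M}\ge\overline{I}$ forces $\overline{I_M}(u,v)=+\infty$ for all $M$, while $\tfrac{1}{M}\int_\Omega|u|^p\,dx<+\infty$ since $u\in BV(\Omega;\mathbb{R}^d)\subset L^p(\Omega;\mathbb{R}^d)$, whence the right-hand side of \eqref{representation1} must also be $+\infty$.
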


\begin{Remark}

%(i) We emphasize that the decoupled model considered in this Theorem is interesting in itself and %it is not a mere corollary of the general  model presented in the case $BV \times L^p$: the %coercivity condition assumed on $\varphi$ is less restrictive than the one of Theorem %\ref{thmdec1}, but the values of $p$ are more restrictive. Moreover, since in this case we have %no need of using Reshetnyak theorem, we can assume a dependence on $u$ in the function %$W$.

Theorem \ref{thmdec2} allows us to consider very general growth conditions also in \cite[Theorem 1.1]{CRZ2} when decoupled energies are considered.

 The continuous embedding of $BV(\O;\mathbb R^d)$ into $L^{\frac{N}{N-1}}(\Omega;\mathbb R^d)$ (with $\Omega \subset \mathbb R^N$) allows us to obtain Theorem \ref{thmdec2}, also replacing \eqref{varphigrowth2} by the following condition:
$$
\exists\ C>0:\ \frac{1}{C}|v|^q-C\leq \varphi(x,u,v)\leq C(1+ |u|^r+ |v|^q),\ \forall \ (x,u,v)\in \Omega\times\mathbb R^d \times \mathbb R^m
$$
and for some $r \in \left[1,\frac{N}{N-1}\right]$.

%(iii) In order to describe the right hand side of (\ref{Jdecrep})  we refer, as for Theorem  %\ref{thmdec1} to formulae (\ref{convexification1}), (\ref{qcx-rel}), to Definition %\ref{recessiondef}, and finally we observe that the surface energy density $\gamma$ is defined %in (\ref{gammaAFP}) below.
\end{Remark}

We observe that under assumptions $(i)\div (iv)$ of Theorem \ref{thmdec1}, \cite[Theorem 2.16]{Fonseca-Muller-relaxation} ensures that the functional
$$
\int_\O QW(x,u,\nabla u)\,dx + \int_{J_u} \gamma(x,u^+,u^-,\nu_u)\,d {\cal H}^{N-1}+
\int_\O (QW)^\infty\left(x,u, \frac{d D^c u}{d|D^cu|}\right)d |D^c u|
$$
is lower semicontinuous with respect to the strong-$L^1$ topology.
Moreover \cite[Theorem 7.5]{Fonseca-Leoni} guarantees that
$$
\int_\Omega C\varphi(x,u,v)\,dx
$$
is sequentially weakly lower semicontinuous with respect to $L^1_{\rm strong}\times L^1_{\rm weak}$-topology provided the function $C\varphi$ is convex in the last variable, satisfies suitable growth conditions, as those in \eqref{cvarphigrowth1} and \eqref{cvarphigrowth2}, and that the function $C\varphi(x,\cdot,\cdot)$ is lower semicontinuous.
We will observe in Remark \ref{measCvarphi} below that this latter condition may not be verified just under the assumptions of Theorems \ref{thmdec1} and \ref{thmdec2}.
On the other hand an argument entirely similar to \cite[Theorem 9.5]{Dacorogna} guarantees that $C\varphi(x,\cdot,\cdot)$ is lower semicontinuous (even continuous) by assuming additionally that
\begin{equation}\label{uniformcontinuityvarphi}
|\varphi(x,u,\xi)- \varphi(x, u',\xi)|\leq \omega'(|u-u'|)(|\xi| +1)
\end{equation}
for a suitable modulus of continuity $\omega'$, i.e. $\omega' :\mathbb R^+\cup\{0\}\to \mathbb R^+\cup\{0\}$ continuous and such that $\omega' (0)=0$.

Consequently the superadditivity of $\liminf$ entails the sequentially strong-weak lower semicontinuity of the right hand side of \eqref{representation1} even for $q=1$.

\section{Notations and General Facts}\label{Notations}

\subsection{Properties of the integral density functions}

In this subsection we recall several notions applied to functions like quasiconvexity,  envelopes and recession function, etc. We also recall or prove properties of those functions that will be useful through the paper. Such notions and related properties will apply to the density functions that will appear in the relaxed functionals that we characterize. We start recalling the notion of quasiconvex function due to Morrey.

\begin{Definition}
A Borel measurable function $h: \mathbb R^{d \times N} \to \mathbb R$ is said to be quasiconvex if there exists a bounded open set $D$ of $\mathbb R^N$ such that
$$h(\xi)\le\frac{1}{|D|}\int_D h(\xi + \nabla \varphi(x))\,dx,$$
for every $\xi\in \mathbb R^{d \times N}$, and for every $\varphi \in W^{1, \infty}_0(D;\mathbb R^{d})$.
\end{Definition}

If $h: \mathbb R^{d \times N} \to \mathbb R$ is any given Borel measurable function bounded from below, it can be defined the quasiconvex envelope of $h$, that is the largest quasiconvex function below $h$:
$$Qh(\xi):=\sup\{g(\xi):\ g \leq h, \ g \hbox{ quasiconvex}\}.
$$
Moreover, as well known (see the monograph \cite{Dacorogna}),
\begin{equation}\label{qcx-rel}
Qh(\xi):=\inf\left\{\frac{1}{|D|}\int_D h(\xi+\nabla \varphi(x))\,dx:\ \varphi\in W_0^{1,\infty}(D;\mathbb{R}^d)\right\},
\end{equation}
for any bounded open set $D\subset\mathbb{R}^{N}$.

\begin{Proposition}\label{measQW}
Let $\Omega \subset \mathbb R^N$ be a bounded open set and
$$
W: \Omega \times \mathbb R^d \times \mathbb R^{d \times N} \to \mathbb [0, +\infty)
$$
be a continuous function. Let $QW$ be the quasiconvexification of $W$  (see (\ref{qcx-rel})). Then the validity of (i) in Theorem \ref{thmdec1} guarantees that there exists  a constant $C>0$ such that
\begin{equation}\label{H1QW}
\frac{1}{C}|\xi|- C \leq QW(x,u,\xi) \leq C(1+ |\xi|),\ \forall\ (x,u,\xi) \in \Omega \times \mathbb R^d  \times \mathbb R^{d \times N}.
\end{equation}

The validity of (i) and (ii) of Theorem \ref{thmdec1} ensures that
for every compact set $K \subset \Omega \times \mathbb R^d$, there exists a continuous function $\omega'_K:\mathbb{R}\to [0, +\infty)$ such that
$\omega'_K(0)=0$ and
\begin{equation}\label{H4QW1}
|QW(x,u,\xi)- QW(x',u',\xi)| \leq \omega'_K(|x-x'|+|u-u'|)(1 +|\xi|),\  \forall\ (x,u),(x',u')\in K,\ \forall\ \xi \in \mathbb R^{d \times N}.
\end{equation}

Conditions (i) and (iii) of Theorem \ref{thmdec1} entail that, for every $x_0 \in \Omega$ and $\e >0$, there exists $\delta>0$ such that
\begin{equation}\label{H4QW2}
|x-x_0|\leq \delta\ \Rightarrow\ \forall\ (u,\xi)\in \mathbb R^d \times \mathbb R^{d \times N} \; \; QW(x,u,\xi) - QW(x_0,u,\xi) \geq -\e(1+|\xi|).
\end{equation}
Moreover, if $W$ satisfies conditions (i) and (ii) of Theorem \ref{thmdec1}, $QW $ is a continuous function.
\end{Proposition}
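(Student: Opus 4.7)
The plan is to prove all four assertions directly from the infimum formula \eqref{qcx-rel}, using a single test-function comparison in each part together with a uniform bound on $\frac{1}{|D|}\int_D |\xi+\nabla\varphi|\,dy$ along nearly optimal sequences.

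For the growth bound \eqref{H1QW}, the upper estimate is immediate since $QW\le W\le C(1+|\xi|)$. For the lower estimate, fix any $\varphi\in W^{1,\infty}_0(D;\mathbb R^d)$; since $\int_D\nabla\varphi\,dy=0$, Jensen's inequality applied to the convex map $\zeta\mapsto|\zeta|$ gives $\frac{1}{|D|}\int_D|\xi+\nabla\varphi|\,dy\ge|\xi|$. Combining this with $(i)$ and passing to the infimum over $\varphi$ yields $\frac{1}{C}|\xi|-C\le QW(x,u,\xi)$. As a byproduct, if one chooses $\varphi$ such that $\frac{1}{|D|}\int_D W(x,u,\xi+\nabla\varphi)\,dy\le QW(x,u,\xi)+\eta$, then $(i)$ together with the upper bound in \eqref{H1QW} gives a constant $C'>0$ such that
\[
\frac{1}{|D|}\int_D|\xi+\nabla\varphi|\,dy\le C'(1+|\xi|).
\]
This a priori bound on nearly optimal test functions is the key tool driving parts 2 and 3.

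For \eqref{H4QW1}, fix $(x,u),(x',u')\in K$ and $\xi\in\mathbb R^{d\times N}$ and, given $\eta>0$, pick $\varphi$ nearly optimal for $QW(x',u',\xi)$. Using $(ii)$ pointwise,
\[
\frac{1}{|D|}\int_D W(x,u,\xi+\nabla\varphi)\,dy\le\frac{1}{|D|}\int_D W(x',u',\xi+\nabla\varphi)\,dy+\omega_K(|x-x'|+|u-u'|)\Bigl(1+\frac{1}{|D|}\int_D|\xi+\nabla\varphi|\,dy\Bigr).
\]
Taking the infimum on the left, inserting the a priori bound $C'(1+|\xi|)$ on the average of $|\xi+\nabla\varphi|$, and letting $\eta\to 0$ produces $QW(x,u,\xi)-QW(x',u',\xi)\le\omega_K(|x-x'|+|u-u'|)(1+C')(1+|\xi|)$. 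Interchanging the roles of the two points yields \eqref{H4QW1} with $\omega'_K:=(1+C')\omega_K$. The same strategy, now applied with $(iii)$ in place of $(ii)$ and choosing $\varphi$ nearly optimal for $QW(x,u,\xi)$, produces \eqref{H4QW2}.

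Finally, for continuity of $QW$, I would separate the $\xi$ variable from $(x,u)$. Since $QW(x,u,\cdot)$ is quasiconvex and finite-valued (by \eqref{H1QW}), it is locally Lipschitz in $\xi$, so $\xi\mapsto QW(x,u,\xi)$ is continuous for every $(x,u)$. Combined with the uniform-in-$\xi$-on-compacts continuity in $(x,u)$ given by \eqref{H4QW1}, a triangle-inequality splitting $|QW(x_n,u_n,\xi_n)-QW(x,u,\xi)|\le|QW(x_n,u_n,\xi_n)-QW(x,u,\xi_n)|+|QW(x,u,\xi_n)-QW(x,u,\xi)|$ along any converging sequence $(x_n,u_n,\xi_n)\to(x,u,\xi)$ gives joint continuity. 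The main technical obstacle throughout is the a priori bound on $\frac{1}{|D|}\int_D|\xi+\nabla\varphi|\,dy$ along almost-minimizing $\varphi$: without it the error term from $(ii)$ or $(iii)$ would not be controllable in terms of $|\xi|$, and the growth in \eqref{H4QW1}--\eqref{H4QW2} would be lost.
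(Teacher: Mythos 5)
Your proposal is correct and follows essentially the same approach as the paper's own proof: the growth bound and the a priori $L^1$-bound on nearly optimal test functions (via coercivity and Jensen) are the key tools, hypotheses (ii) and (iii) are transferred from $W$ to $QW$ by comparing averages over $D$, and continuity follows from the separate $\xi$-continuity of the quasiconvex function $QW(x_0,u_0,\cdot)$ combined with \eqref{H4QW1} and a triangle inequality.
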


\begin{Remark}\label{measCvarphi}
%In the case $W=W(x,\xi)$, as it is the case in Theorem \ref{thmdec1}, we obtain an analogous %result under the assumptions (i), (ii), (iii) and (iv) of the mentioned theorem.
Analogous arguments entail that, under hypothesis $(v)$ of Theorems \ref{thmdec1} and \ref{thmdec2}, respectively,
\begin{equation}\label{cvarphigrowth1}
 \exists \, C>0: \, \frac{1}{C}(|u|^p + |v|^q)-C\leq C\varphi(x,u,v)\leq C(1+ |u|^p+ |v|^q), \;\; \forall \, (x, u, v) \in \Omega \times \mathbb R^d \times \mathbb R^m,
\end{equation}
and
\begin{equation}\label{cvarphigrowth2}
\exists \, C>0: \,\frac{1}{C}|v|^q-C\leq C\varphi(x,u,v)\leq C(1+ |u|^p+ |v|^q), \;\; \forall \,(x, u, v) \in \Omega \times \mathbb R^d \times \mathbb R^m.
\end{equation}
On the other hand we emphasize that being $\varphi$ as in Theorems \ref{thmdec1} and \ref{thmdec2}, namely a Carath\'eodory function, this is not enough to guarantee that $C\varphi$ is still a Carath\'eodory function, cf. Example 9.6 in \cite{Dacorogna} and Example 7.14 in \cite{Fonseca-Leoni}.
In particular $C\varphi$ turns out to be measurable in $x$, upper semicontinuous in $u$, convex and hence continuous in $\xi$. Furthermore if $q>1$, \cite[Lemma 4.3]{DM-F-L-M} guarantees that $C\varphi(x, \cdot, \cdot)$ is lower semicontinuous.
\end{Remark}

\begin{proof}[Proof] By definition of the quasiconvex envelope of $W$, it is easily seen that $(i)$ of Theorem \ref{thmdec1} entails (\ref{H1QW}) with the same constant appearing in $(i)$.

Next we prove (\ref{H4QW1}).
Let $K$ be a compact set in $\Omega \times \mathbb R^d$ and take $(x,u), (x',u')\in K$.
Let $\e>0$, then using condition (\ref{qcx-rel}), we find  $\varphi_{\e} \in W^{1,\infty}_0(Q;\mathbb{R}^d)$, $Q$ being the unitary cube, such that
$$
QW(x,u,\xi) \geq - \e+ \int_Q W(x,u, \xi + \nabla \varphi_{\e}(y))\,dy.
$$
Now, we observe that, by virtue of the coercivity condition expressed by $(i)$ of Theorem \ref{thmdec1} and by \eqref{H1QW}, it follows that
$$
\|\xi+ \nabla \varphi_\e\|_{L^1} \leq c(1+|\xi|).
$$
By condition $(ii)$ of Theorem \ref{thmdec1}, for every $(x,u), (x',u') \in K$ and for every $\xi\in  \mathbb R^{d \times N}$, it results
$$
|W(x,u,\xi)-W(x',u',\xi)|\leq \omega_K(|x-x'|+|u-u'|)(1+ |\xi|).
$$
Then we can write the following chain of inequalities:

$$
\begin{array}{ll}
QW (x,u,\xi)  &\displaystyle \geq -  \e + \int_Q W(x,u,\xi+ \nabla \varphi_{\e}(y))\,dy \\
\\
& \geq \displaystyle -\e- \int_Q \lambda(y)\,dy + \int_{Q} W(x',u', \xi+ \nabla \varphi_{\e}(y))\,dy,
\end{array}
$$
where $\lambda(y):= |W(x,u, \xi +\nabla \varphi_{\e}(y))- W(x',u', \xi +\nabla \varphi_{\e}(y))|.$
We therefore get, from the definition of $QW(x',u',\xi)$, that,
%for $|x-x'|+|u-u'|\leq \delta_1$,
$$
\begin{array}{rcl}
\displaystyle{QW(x',u',\xi)-QW(x,u,\xi)} & \leq & \displaystyle{\e + \omega_K(|x-x'|+|u-u'|)(1 + \|\xi +\nabla \varphi_\e\|_{L^1})}\vspace{0.2cm}\\ & \le &
\displaystyle{\e + \omega_K(|x-x'|+|u-u'|)(1+ c(1+|\xi|)).}
\end{array}
$$
Since $\e$ is arbitrarily chosen, and since we can obtain in a similar way the same inequality with $x$ in the place of $x'$, and $u$ in the place of $u'$, we get (\ref{H4QW1}).

In order to prove condition (\ref{H4QW2}), we fix $x_0 \in \Omega$ and $\e>0$. As before, for every $x \in \Omega$ and for every $\sigma>0$, by \eqref{qcx-rel}, the coercivity condition expressed by $(i)$ of Theorem \ref{thmdec1}, and by \eqref{H1QW}, there exist a constant $c>0$ and a function $\varphi_\sigma \in W^{1,\infty}_0(Q;\mathbb R^d)$ such that
$$
QW(x,u,\xi)\geq - \sigma+\int_Q W(x,u,\xi+ \nabla \varphi_\sigma(y))\,dy ,
$$
with $\|\xi +\nabla \varphi_\sigma\|_{L^1} \leq c(1+|\xi|)$.

Thus arguing as above, and exploiting condition $(iii)$ of Theorem \ref{thmdec1}, we have the following chain of inequalities, for $|x-x_0|<\delta$ with $\delta$ as in condition $(iii)$ of Theorem \ref{thmdec1},
$$
\begin{array}{rcl}
QW(x_0,u,\xi) & \leq & \displaystyle\int_Q W(x_0,u,\xi+\nabla \varphi_\sigma(y))\,dy\\
\\& \le & \displaystyle
\int_Q W(x,u, \xi+\nabla\varphi_\sigma(y))\,dy + \e\int_Q (1 +|\xi+\nabla\varphi_\sigma(y)|)\,dy\\
\\ & \le &
QW(x,u,\xi) + \sigma + \e(1 + c(1+|\xi|)).
\end{array}
$$
Thus it suffices to let $\sigma$ go to $0$ in order to achieve the statement.

Finally we prove the continuity of $QW$. We need to show that, for every  $\e >0$ and $(x_0,u_0,\xi_0)\in \Omega \times \mathbb R^d \times \mathbb R^{d \times N}$, there is  $\delta \equiv \delta (\e, x_0, u_0, \xi_0)>0$ such that
\begin{equation}\label{9.12Dacorogna}
|x-x_0|+|u-u_0|+  |\xi-\xi_0| \leq \delta \Rightarrow |QW(x,u,\xi)-QW(x_0,u_0,\xi_0)|\leq \e.
\end{equation}

Let $\e>0$ be fixed. Since $QW$ is quasiconvex on $\xi$, $QW(x_0,u_0,\cdot)$ is continuous and thus we can find $\delta_1=\delta_1(\e,x_0,u_0,\xi_0)>0$ such that
$$
|\xi-\xi_0| \leq \delta_1\ \Rightarrow\ |QW(x_0,u_0,\xi)-QW(x_0,u_0,\xi_0)|\leq \frac{\e}{2}.
$$
Moreover, by virtue of (\ref{H4QW1}), defining $K:=\overline{B}_\sigma(x_0,u_0)$ for some $\sigma>0$ such that $K\subset\Omega\times\mathbb{R}^d$, one has
$$
|\xi-\xi_0| \leq \delta_1\ \Rightarrow\ |QW(x,u,\xi)-QW(x_0,u_0,\xi)|\leq \omega'_{K}(|x-x_0|+|u-u_0|)(1+|\xi_0|+\delta_1).
$$

Since $\omega'_K$ is continuous and $\omega'_K(0)=0$, there is $\delta_2=\delta_2(\e,K,\xi_0)>0$ such that
$$
|x-x_0|+|u-u_0| \leq \delta_2\ \Rightarrow\ \omega'_{K}(|x-x_0|+|u-u_0|)\le \frac{\e}{2(1+|\xi_0|+1)}.
$$

Consequently, by choosing $\delta$ as $\min \{\delta_1, \delta_2\}$,  the above inequalities, and the triangular inequality give indeed (\ref{9.12Dacorogna}).
\end{proof}

We also recall the definition of the recession function.

\begin{Definition}\label{recessiondef} Let $h:\mathbb{R}^{d \times N}\to [0,+\infty)$. The recession function of $h$ is denoted by $h^\infty: \mathbb{R}^{d \times N}\to [0, +\infty)$, and defined as
$$\displaystyle{h^\infty(\xi):=\limsup_{t \to + \infty} \frac{h(t \xi)}{t}}.$$
\end{Definition}

\begin{Remark}\label{remarkonrecession}{\rm (i) Recall that the recession function is a positively one homogeneous function, that is $g(t\xi)=tg(\xi)$ for every $t\ge 0$ and $\xi\in\mathbb{R}^{d\times N}$.

\noindent(ii) Through this paper we will work with functions $W:\Omega \times \mathbb R^d \times \mathbb R^{d \times N}\to [0,+\infty)$ and $W^\infty$ is the recession function with respect to the last variable:
$$\displaystyle{W^\infty(x,u,\xi):=\limsup_{t \to + \infty} \frac{W(x,u,t \xi)}{t}}.$$

We trivially observe that, if $W$ satisfies the growth condition $(i)$ in Theorem \ref{thmdec1}, then $W^\infty$ satisfies $\frac{1}{C}|\xi|\leq W^\infty(x,u,\xi)\le C |\xi|.$

\noindent(iii) As showed in \cite[Remark 2.2 (ii)]{Fonseca-Muller-relaxation}, if a function $h:\mathbb{R}^{d\times N}\longrightarrow [0,+ \infty)$ is quasiconvex and satisfies the growth condition $h(\xi)\le c(1+|\xi|)$, for some $c>0$, then, its recession function is also quasiconvex.}
\end{Remark}

We now describe the surface energy density $\gamma$ appearing in the characterization of $\overline{I}$. Let $W:\Omega \times \mathbb R^d \times \mathbb R^{d \times N}\to \mathbb{R}$. By the notation above $(QW)^\infty$ is the recession function of the quasiconvex envelope of $W$. Then $\gamma:\Omega\times\mathbb{R}^n\times\mathbb{R}^n\times S^{N-1}\longrightarrow\mathbb{R}$ is defined by
\begin{equation}\label{gammaAFP}
\displaystyle{\gamma(x,a,b,\nu)= \inf \left\{\int_{Q_\nu} (QW)^\infty(x,\phi(y),\nabla \phi(y))\,dy : \phi \in {\cal A}(a,b,\nu)\right\},}
\end{equation} where $Q_\nu$ is the unit cube centered at the origin with faces parallel to $\nu,\nu_1, \dots, \nu_{N-1}$, for some orthonormal basis of $\mathbb R^N$, $\{\nu_1,\dots, \nu_{N-1}, \nu\}$, and where
$$
\begin{array}{ll}
\displaystyle{{\cal A}(a,b,\nu):= \left\{ \phi \in W^{1,1}(Q_\nu,\mathbb{R}^d): \phi(y)=a \hbox{ if } <y, \nu>= \frac{1}{2},\ \phi(y)=b \hbox{ if } <y,\nu>= - \frac{1}{2},  \right.}\\
\displaystyle{\left. \hspace{6cm}\phi \hbox{ is }1-\hbox{periodic in the } \nu_1, \dots, \nu_{N-1} \hbox{ directions } \right\}.}
\end{array}
$$

We observe that the function $\gamma$ is the same whether we consider in the set $\mathcal{A}(a,b,\nu)$, $W^{1,1}(Q_\nu,\mathbb{R}^d)$ functions (like in \cite{Fonseca-Rybka} and \cite{Fonseca-Muller-relaxation}) or $W^{1,\infty}(Q_\nu,\mathbb{R}^d)$ functions (like in \cite[page 312]{AFP}). Moreover, if $W$ doesn't depend on $u$, $W:\Omega \times \mathbb R^{d \times N}\to \mathbb{R}$, then $\gamma(x,a,b,\nu)=(QW)^\infty(x,(a-b)\otimes\nu)$ (see \cite[page 313]{AFP}).\bigskip

 Properties of the function $(QW)^\infty$ will be important to get the integral representation of the relaxed functionals under consideration. In particular, a proof entirely similar to \cite[Proposition 3.4]{BZZ} ensures that for every $(x,u,\xi) \in \Omega \times \mathbb R^d \times \mathbb R^{d \times N}, $ $Q(W^\infty)(x,u,\xi)= (QW)^\infty(x,u,\xi)$.

\begin{Proposition}\label{QWinfty=}
Let $W:\Omega \times \mathbb R^d \times \mathbb R^{d \times N}\to [0, + \infty)$ be a continuous function satisfying $(i)$ and $(iv)$ of Theorem \ref{thmdec1}.
Then
\begin{equation}\label{QWinf=}
Q(W^\infty)(x,u,\xi)= (QW)^\infty(x,u,\xi) \;\;\;\hbox{ for every }(x,u,\xi) \in \Omega \times \mathbb R^d \times \mathbb R^{d \times N}.
\end{equation}
\end{Proposition}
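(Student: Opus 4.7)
The plan is to prove the two inequalities $(QW)^{\infty}\leq Q(W^\infty)$ and $(QW)^\infty\geq Q(W^\infty)$ separately, using in both cases the scaling behaviour built into the definition of the recession function together with the cell formula \eqref{qcx-rel}.

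For the easier inequality $(QW)^\infty\leq Q(W^\infty)$, the argument is essentially abstract. Since $QW\leq W$ by definition, dividing by $t>0$ and passing to the $\limsup$ as $t\to+\infty$ gives $(QW)^\infty(x,u,\xi)\leq W^\infty(x,u,\xi)$. By Proposition \ref{measQW} the function $QW$ satisfies \eqref{H1QW}, and therefore Remark \ref{remarkonrecession}(iii) applied to $QW(x,u,\cdot)$ tells us that $(QW)^\infty(x,u,\cdot)$ is itself quasiconvex. Being a quasiconvex minorant of $W^\infty(x,u,\cdot)$ it is dominated by the largest such minorant, which by definition is $Q(W^\infty)(x,u,\cdot)$.

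For the reverse inequality I would fix $(x,u,\xi)$ and $\varepsilon>0$ and, using the cell representation \eqref{qcx-rel} on the unit cube $Q$, select $\varphi_t\in W_0^{1,\infty}(Q;\mathbb{R}^d)$ satisfying
$$ QW(x,u,t\xi)\geq \int_Q W(x,u,t\xi+\nabla\varphi_t(y))\,dy-\varepsilon. $$
Rescaling via $\psi_t:=\varphi_t/t\in W_0^{1,\infty}(Q;\mathbb{R}^d)$ and dividing by $t$ yields
$$ \frac{QW(x,u,t\xi)}{t}\geq \int_Q \frac{W(x,u,t(\xi+\nabla\psi_t(y)))}{t}\,dy-\frac{\varepsilon}{t}. $$
The coercivity and upper bound in (i), together with $QW(x,u,t\xi)\leq C(1+t|\xi|)$ from \eqref{H1QW}, provide a uniform $L^1$-bound $\int_Q|\xi+\nabla\psi_t|\,dy\leq M$ independent of $t$. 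Applying hypothesis (iv) on the set $E_t:=\{y\in Q:\,t|\xi+\nabla\psi_t(y)|>L\}$ gives
$$\frac{W(x,u,t(\xi+\nabla\psi_t))}{t}\geq W^\infty(x,u,\xi+\nabla\psi_t)-C\,\frac{|\xi+\nabla\psi_t|^{1-\alpha}}{t^\alpha},$$
while on the complement $|\xi+\nabla\psi_t|\leq L/t$ so that, by the linear growth in (i) and Remark \ref{remarkonrecession}(ii), both integrands are pointwise $O(1/t)$. Jensen's inequality on the concave map $z\mapsto z^{1-\alpha}$ controls the error on $E_t$ by $CM^{1-\alpha}t^{-\alpha}$.

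Combining these estimates, and using that $\psi_t$ is admissible in \eqref{qcx-rel} defining $Q(W^\infty)(x,u,\xi)$, yields
$$\frac{QW(x,u,t\xi)}{t}\geq \int_Q W^\infty(x,u,\xi+\nabla\psi_t(y))\,dy-o(1)\geq Q(W^\infty)(x,u,\xi)-o(1)$$
as $t\to+\infty$; taking $\limsup$ gives $(QW)^\infty(x,u,\xi)\geq Q(W^\infty)(x,u,\xi)$. The main obstacle is precisely this uniform control of the error arising from (iv), because the test function $\psi_t$ is itself $t$-dependent; the crucial point is that the coercivity in (i) supplies a uniform $L^1$-bound on $\xi+\nabla\psi_t$, which combined with Jensen's inequality and the rate $t^{-\alpha}$ from (iv) makes every error contribution vanish in the limit.
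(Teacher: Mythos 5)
Your proof is correct and follows essentially the same route as the paper: the easy inequality via quasiconvexity of $(QW)^\infty$, and the hard inequality by picking near-optimal test functions $\varphi_t$ in the cell formula for $QW(x,u,t\xi)$, rescaling to $\psi_t=\varphi_t/t$, using (i) and \eqref{H1QW} for a uniform $L^1$-bound, splitting $Q$ at the threshold $t|\xi+\nabla\psi_t|=L$, and invoking (iv) plus Jensen/H\"{o}lder on the concave power $z\mapsto z^{1-\alpha}$ to kill the error as $t\to+\infty$. The only cosmetic difference is that the paper fixes the slack in the near-optimal choice to the constant $1$ rather than to $\varepsilon$, but since it is divided by $t$ this is immaterial.
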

\begin{proof}[Proof]
The proof will be achieved by double inequality.

By definition of the quasiconvex envelope and the recession function, one gets $(QW)^\infty\le W^\infty$ and thus $Q(QW)^\infty\le Q(W^\infty).$ Since the recession function of a quasiconvex one is still quasiconvex, under hypothesis (i) of Theorem \ref{thmdec1} (cf. Remark \ref{remarkonrecession} (iii)) it follows that $(QW)^\infty\le Q(W^\infty).$

In order to prove the opposite inequality we start noticing that, since by (i), the function $W$ is bounded from below, we can assume without loss of generality that $W\ge 0$. Then fix $(x,u,\xi)\in \Omega \times \mathbb R^d \times \mathbb R^{d \times N}$ and, for every $t >1$, take $\varphi_t \in W^{1,\infty}_0(Q;\mathbb R^d)$ such that
\begin{equation}\label{3.7BZZ}
\int_Q W(x,u, t \xi + \nabla \varphi_t(y))\,dy \leq QW(x,u, t \xi )+ 1.
\end{equation}
By $(i)$ and \eqref{H1QW} we have that $\left\|\nabla(\frac{1}{t}\varphi_t)\right\|_{L^1(Q)}\leq C $ for a constant independent of $t$ but just on $\xi$.

Defining $\psi_t=\frac{1}{t}\varphi_t$, one has $\psi_t \in W^{1,\infty}_0(Q;\mathbb R^d)$ and thus
$$Q(W^\infty)(x,u, \xi)\leq \int_Q W^\infty(x,u, \xi + \nabla \psi_t(y))\,dy.$$

Let $L$ be the constant appearing in condition $(iv)$ of Theorem \ref{thmdec1}, we split the cube $Q$ in the set $\{y\in Q:\ t|\xi+\nabla \psi_t(y)|\le L\}$ and its complement in $Q$. Then we apply condition $(iv)$ and the growth of $W^\infty$ observed in Remark \ref{remarkonrecession} $(ii)$ to get
$$\begin{array}{rcl}Q(W^\infty)(x,u,\xi) & \leq & \displaystyle \int_Q \left(C\frac{|\xi+\nabla \psi_t|^{1-\alpha}}{t^\alpha}+\frac{W(x,u,t\xi+\nabla \varphi_t(y))}{t}+C\frac{L}{t}\right)\,dy.
\end{array}$$

Applying H\"{o}lder inequality and (\ref{3.7BZZ}), we get
$$\begin{array}{rcl}Q(W^\infty)(x,u,\xi) & \leq & \displaystyle \frac{C}{t^\alpha}\left(\int_Q |\xi+\nabla \psi_t|\,dy\right)^{1-\alpha}+\frac{QW(x,u, t \xi )+ 1}{t}+C\frac{L}{t},
\end{array}$$
and the desired inequality follows by definition of $(QW)^\infty$ and using the fact that $\nabla\psi_t$ has bounded $L^1$ norm, letting $t$ go to $+\infty$.
\end{proof}

The property of $(QW)^\infty$ stated next ensures that $QW$ together with $(QW)^\infty$ satisfy the analogous condition to $(iv)$ of Theorem \ref{thmdec1}.
To this end we first observe, as emphasized in \cite{Fonseca-Muller-relaxation}, that $(iv)$ in Theorem \ref{thmdec1} is equivalent to say that there exist $C >0$ and $\alpha \in (0,1) $ such that
\begin{equation}\label{ivequivalent}
\displaystyle{\left|W^\infty(x,u,\xi)-W(x,u,\xi)\right|\leq C (1+ |\xi|^{1-\alpha})}
\end{equation}
for every $(x,u,\xi) \in \Omega \times \mathbb R^d \times \mathbb R^{d \times N}.$
Precisely we have the following result.

\begin{Proposition}\label{propperH5}
Let $W:\Omega \times \mathbb R^d \times \mathbb R^{d \times N}\to [0, + \infty)$ be a continuous function satisfying $(i)$ and $(iv)$  of Theorem \ref{thmdec1}.
Then, there exist $\alpha\in (0,1),$ and $C'>0$ such that
$$ \displaystyle{\left| (QW)^\infty(x,u,\xi)- QW(x,u, \xi)\right|\leq C( 1+|\xi|^{1-\alpha}), \;\;\forall\ (x,u, \xi)\in \Omega \times\mathbb{R}^d\times \mathbb R^{d \times N}}.$$
\end{Proposition}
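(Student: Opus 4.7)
The plan is to reduce the statement, via Proposition \ref{QWinfty=}, to the estimate
$$\bigl|Q(W^\infty)(x,u,\xi) - QW(x,u,\xi)\bigr| \leq C(1 + |\xi|^{1-\alpha}),$$
and then exploit the equivalent form \eqref{ivequivalent} of hypothesis $(iv)$, namely $|W^\infty(x,u,\xi) - W(x,u,\xi)| \leq C(1+|\xi|^{1-\alpha})$, through the representation formula \eqref{qcx-rel} for the quasiconvex envelope. Throughout, $(x,u)$ will be fixed and I will simply write $W(\xi)$ and $W^\infty(\xi)$; I will also take $Q$ the unit cube.

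First I would prove the upper bound $Q(W^\infty)(\xi) \leq QW(\xi) + C(1+|\xi|^{1-\alpha})$. For $\varepsilon > 0$, pick $\varphi_\varepsilon \in W^{1,\infty}_0(Q;\mathbb{R}^d)$ with $\int_Q W(\xi+\nabla\varphi_\varepsilon)\,dy \leq QW(\xi)+\varepsilon$. Coercivity in $(i)$ together with the upper bound on $QW$ from \eqref{H1QW} yields $\int_Q |\xi+\nabla\varphi_\varepsilon|\,dy \leq C(1+|\xi|)$. Then, using \eqref{ivequivalent} and the concavity of $t\mapsto t^{1-\alpha}$ (via Jensen),
$$Q(W^\infty)(\xi) \leq \int_Q W^\infty(\xi+\nabla\varphi_\varepsilon)\,dy \leq \int_Q W(\xi+\nabla\varphi_\varepsilon)\,dy + C + C\left(\int_Q |\xi+\nabla\varphi_\varepsilon|\,dy\right)^{1-\alpha},$$
which is bounded by $QW(\xi) + \varepsilon + C(1+|\xi|)^{1-\alpha} \leq QW(\xi) + C'(1+|\xi|^{1-\alpha})$. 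Letting $\varepsilon \to 0$ gives the desired inequality.

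For the reverse bound $QW(\xi) \leq Q(W^\infty)(\xi) + C(1+|\xi|^{1-\alpha})$, the argument is symmetric: choose $\psi_\varepsilon \in W^{1,\infty}_0(Q;\mathbb{R}^d)$ with $\int_Q W^\infty(\xi + \nabla\psi_\varepsilon)\,dy \leq Q(W^\infty)(\xi) + \varepsilon$. The growth $\frac{1}{C}|\eta|\leq W^\infty(\eta) \leq C|\eta|$ from Remark \ref{remarkonrecession}$(ii)$ still gives the $L^1$ control $\int_Q |\xi+\nabla\psi_\varepsilon|\,dy \leq C(1+|\xi|)$, and again by \eqref{ivequivalent} and Jensen,
$$QW(\xi) \leq \int_Q W(\xi+\nabla\psi_\varepsilon)\,dy \leq \int_Q W^\infty(\xi+\nabla\psi_\varepsilon)\,dy + C + C\left(\int_Q |\xi+\nabla\psi_\varepsilon|\,dy\right)^{1-\alpha} \leq Q(W^\infty)(\xi) + C''(1+|\xi|^{1-\alpha}).$$
Combining both inequalities and invoking Proposition \ref{QWinfty=} to replace $Q(W^\infty)$ by $(QW)^\infty$ completes the proof.

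No real obstacle is expected: the only point requiring care is to make sure the $L^1$ bound on competitors $\xi + \nabla\varphi_\varepsilon$ depends linearly on $|\xi|$ (so that Jensen applied to $t\mapsto t^{1-\alpha}$ produces the right exponent), and this is where conditions $(i)$ and \eqref{H1QW} are used. Everything else is a bookkeeping exercise with constants.
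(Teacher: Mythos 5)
Your proof is correct and follows essentially the same route as the paper's: both reduce to $Q(W^\infty)$ via Proposition~\ref{QWinfty=}, both select near-optimal competitors $\varphi_\varepsilon,\psi_\varepsilon$ in the representation formula \eqref{qcx-rel} whose $L^1$ norms are controlled linearly in $|\xi|$ via coercivity and \eqref{H1QW}, and both apply \eqref{ivequivalent} together with Jensen/H\"older for the concave power $t\mapsto t^{1-\alpha}$ before letting $\varepsilon\to 0$.
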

\begin{proof}[Proof]
The thesis will be achieved by double inequality.
Let $\alpha \in (0,1)$ be as in $(iv)$ of Theorem \ref{thmdec1}, see also \eqref{ivequivalent}. Let $\xi \in \mathbb R^{d \times N}$,
%\begin{equation}\label{estimateuseful}
%t|\xi|> ({\cal L}^N(D)+ L)C^2,
%\end{equation} where
let $Q$ be the unit cube in $\mathbb R^N$ and let $c$ be a positive constant varying from line to line.
%growth conditions $(i)$ of Theorem \ref{thmdec1} and in \eqref{H1QW}.
For every $\varepsilon >0$ by \eqref{qcx-rel}, find $\varphi \in W^{1,\infty}_0(Q;\mathbb R^d)$ such that
$$
QW(x, u, \xi) > \int_Q W(x,u, \xi + \nabla\varphi(y))\,dy- \varepsilon.
$$
By $(i)$ of Theorem \ref{thmdec1} and by \eqref{H1QW} there exists
$c>0$ such that
\begin{equation}\label{fromabove}\| \xi + \nabla \varphi\|_{L^1} \leq c(1+| \xi|).
\end{equation}

Since
by Proposition \ref{QWinfty=} it results
$$
(QW)^\infty(x,u, \xi)\leq \int_Q W^\infty(x, u, \xi +\nabla \varphi(y))\,dy,
$$
we have
$$
\begin{array}{ll}
\displaystyle{(QW)^\infty(x,u,\xi)- QW(x,u,\xi) \leq \int _Q \left(W^\infty\left(x, u, \xi +\nabla \varphi(y)\right)- W(x,u,  \xi + \nabla \varphi(y))\right)\,dy + \varepsilon}.
\end{array}
$$
Applying \eqref{ivequivalent}, we obtain
$$
\begin{array}{lll}
(QW)^\infty(x,u,\xi)- QW(x,u, \xi) &\leq\displaystyle{\int _Q c\left(1+ \left|{\xi +\nabla \varphi}(y)\right|^{1-\alpha}\right)\,dy +\varepsilon}\\
\\
&\displaystyle{\leq c \left(1+ \int_Q|\xi + \nabla \varphi(y)|^{1-\alpha}dy \right)+ \varepsilon }\\
\\
&\displaystyle{\leq c + c\left(\int_Q|\xi + \nabla \varphi(y)|dy\right)^{1-\alpha}+ \varepsilon }\\
\\
&\displaystyle{\leq c + c^2 (1+ |\xi|^{1-\alpha}) +\varepsilon }\\
\\
&\displaystyle{\leq C' (1+ |\xi|^{1-\alpha}) + \varepsilon.}
\end{array}
$$
where in the last lines we have applied Holder inequality, \eqref{fromabove} and we have estimated the term $(1+ |\xi|)^{1-\alpha}$ by separating the cases $|\xi|\leq 1$ and $|\xi |>1$ and summing them up.
To conclude this part it suffices to send $\varepsilon$ to $0$.

In order to prove the opposite inequality we can argue in the same way. Let
 $\xi\in \mathbb R^{d \times N}$. For every $\varepsilon >0$, by \eqref{qcx-rel} and Proposition \ref{QWinfty=} there exists $\psi\in W^{1,\infty}_0(Q;\mathbb R^d)$ such that
$$
\displaystyle{(QW)^\infty(x,u,\xi)>\int_Q W^\infty(x,u, \xi +\nabla \psi(y))\,dy -\varepsilon.}
$$
Clearly, by \eqref{H1QW}, $(i)$ of Theorem \ref{thmdec1} and (ii) of Remark \ref{remarkonrecession} there exists $C>0$ such that
\begin{equation}\label{fromabove2}
\|\xi +\nabla \psi\|_{L^1}\leq C|\xi|+ \varepsilon.
\end{equation}
By \eqref{qcx-rel} it results
$$
QW(x,u, \xi) \leq \int_Q W(x, u, \xi+ \nabla \psi(y))\,dy,
$$
hence
$$
\displaystyle{QW(x,u,\xi)-(QW)^\infty(x,u,\xi)\leq \int_Q \left(W(x,u,  \xi +  \nabla \psi(y)) - W^\infty(x,u,\xi+\nabla \psi(y))\right)\,dy +\varepsilon}.
$$
Now, $(iv)$ of Theorem \ref{thmdec1} in the form \eqref{ivequivalent} provide
$$
\begin{array}{ll}
\displaystyle{QW(x,u,\xi)-(QW)^\infty(x,u,\xi)}&\displaystyle{\leq C\int_Q (1+ |\xi +\nabla \psi(y)|^{1-\alpha})\,dy+ \varepsilon\leq}\\
\\
&\displaystyle{\leq C' (1+ |\xi|^{1-\alpha})+ \varepsilon,}
\end{array}
$$
where in the last line it has been used Holder inequality, \eqref{fromabove2} and an argument entirely similar to the first part of the proof.
%the fact that $\|\xi + \nabla \psi_t\|_{L^1(D)}\leq C|\xi|+ \varepsilon$
By sending $\varepsilon $ to $0$  we conclude the proof.
\end{proof}

\subsection{Some Results on Measure Theory and $BV$ Functions}

Let $\O$ be a generic open subset of $\Rb^N$, we denote by
$\M(\O)$ the space of all signed Radon measures in $\O$ with bounded
total variation. By the Riesz Representation Theorem, $\M(\O)$ can
be identified to the dual of the separable space $\C_0(\O)$ of
continuous functions on $\O$ vanishing on the boundary $\partial
\O$. The $N$-dimensional Lebesgue measure in $\Rb^N$ is designated
as $\LL^N$ while $\HH^{N-1}$ denotes the $(N-1)$-dimensional
Hausdorff measure. If $\mu \in \M(\O)$ and $\lambda \in \M(\O)$ is a
nonnegative Radon measure, we denote by $\frac{d\mu}{d\lambda}$ the
Radon-Nikod\'ym derivative of $\mu$ with respect to $\lambda$. By a
generalization of the Besicovich Differentiation Theorem (see
\cite[Proposition 2.2]{Ambrosio-Dal Maso}), it can be proved that  there exists a
Borel set $E \subset \O$ such that $\lambda(E)=0$ and
\begin{equation}\label{Thm2.7FMr}
\frac{d\mu}{d\lambda}(x)=\lim_{\rho \to 0^+} \frac{\mu(x+\rho \, C)}{\lambda(x+\rho \, C)}\ \text{ for all }x \in {\rm Supp }\,\ \mu \setminus E\end{equation}
and any open convex
set $C$ containing the origin. (Recall that the set $E$ is independent of $C$.)

\vskip5pt

We say that $u \in L^1(\O;\Rb^d)$ is a function of bounded
variation, and we write $u \in BV(\O;\Rb^d)$, if all its first
distributional derivatives $D_j u_i$ belong to $\M(\O)$ for $1\leq i
\leq d$ and $1 \leq j \leq N$. We refer to \cite{AFP} for a detailed
analysis of $BV$ functions. The matrix-valued measure whose entries
are $D_j u_i$ is denoted by $Du$ and $|Du|$ stands for its total
variation. By the Lebesgue Decomposition Theorem we can split $Du$
into the sum of two mutually singular measures $D^a u$ and $D^s u$
where $D^a u$ is the absolutely continuous part of $Du$ with respect
to the Lebesgue measure $\LL^N$, while $D^s u$ is the singular part
of $Du$ with respect to $\LL^N$. By $\nabla u$ we denote the
Radon-Nikod\'ym derivative of $D^au$ with respect to the Lebesgue
measure so that we can write
$$Du=\nabla u \LL^N + D^s u.$$

The set $S_u$ of points where $u$ does not have an approximate limit is called the approximated discontinuity set, while $J_u \subseteq S_u$ is  the so called jump set of $u$ defined as the set of points $x \in
\O$ such that there exist $u^\pm(x) \in \Rb^d$ (with $u^+(x)\neq
u^-(x)$) and $\nu_u(x) \in \mathbb S^{N-1}$ satisfying
$$
\lim_{\varepsilon \to 0}\frac{1}{\varepsilon^N}\int_{\{y\in B_\e(x): (y-x)\cdot \nu_u(x)>0\}} |u(y)-u^+(x)|\,dy=0,
$$
and
$$
\lim_{\varepsilon \to 0}\frac{1}{\varepsilon^N}\int_{\{y\in B_\e(x): (y-x)\cdot \nu_u(x)<0\}} |u(y)-u^-(x)|\,dy=0.
$$

It is known that $J_u$
is a countably $\HH^{N-1}$-rectifiable Borel set.
By Federer-Vol'pert Theorem (see Theorem 3.78 in \cite{AFP}), ${\cal H}^{N-1}(S_u \setminus J_u)= 0$ for any $u \in BV(\Omega;\mathbb R^d)$.
The measure $D^s
u$ can in turn be decomposed into the sum of a jump part and a
Cantor part defined by $D^j u:=D^s u \res\, J_u$ and $D^c u:= D^s u
\res\, (\O \setminus S_u)$. We now recall the decomposition of $Du$:
$$Du= \nabla u  \LL^N + (u^+ -u^-)\otimes \nu_u {\cal H}^{N-1}\res\,
J_u + D^c u.$$
The three measures above are mutually singular. If ${\cal H}^{N-1}(B) < + \infty$, then $|D^c u|(B)=0$ and there exists a Borel set $E$ such that
$$
{\cal L}^N(E)= 0, \; |D^c u|(X)=|D^c u|(X \cap E)
$$
for all Borel sets $X \subseteq \Omega$.

\section{Relaxation}\label{decoupled}

This section is devoted to the proof of the integral representation results dealing with the decoupled models described in the introduction.

To prove Theorems \ref{thmdec1} and \ref{thmdec2} we will use the characterization for the relaxed functional of $I_W:L^1(\Omega; \mathbb R^d)\longrightarrow\mathbb{R}\cup\{+\infty\}$ defined by
\begin{equation}\label{IW}
I_W(u):= \left\{\begin{array}{ll}
\displaystyle{\int_\O W(x,u(x),\nabla u(x))\,dx }&\hbox{ if }u \in W^{1,1}(\Omega;\mathbb R^d),\vspace{0.2cm}\\
+ \infty &\hbox{otherwise}.
\end{array}
\right.
\end{equation}
%with $W$ having or not explicit dependence on $u$ according to the context of each theorem.

\noindent The relaxed functional of $I_W$ is defined by $$\displaystyle{\overline{I_W}(u):=\inf\left\{\liminf_n I_W(u_n):\ u_n\in BV(\Omega;\mathbb{R}^d),\ u_n \to u \hbox{ in }L^1\right\}}$$ and it was characterized by Fonseca-M\"{u}ller in \cite{Fonseca-Muller-relaxation}, provided (among other hypotheses) that $W$ is quasiconvex. In the next lemma we establish conditions to obtain the representation of $I_W$ in the general case, that is, with $W$ not necessarily quasiconvex.

 We will also use the following notation. The functional $I_{QW}:L^1(\Omega; \mathbb R^d)\longrightarrow\mathbb{R}\cup\{+\infty\}$ is defined by \begin{equation}\label{IQW}
I_{QW}(u):=\left\{\begin{array}{ll}
\displaystyle{\int_\O QW(x,u(x),\nabla u(x))\,dx }&\hbox{ if }u \in W^{1,1}(\Omega;\mathbb R^d),\vspace{0.2cm} \\
+ \infty &\hbox{otherwise},
\end{array}
\right.
\end{equation}
 and its relaxed functional is $$\displaystyle{\overline{I_{QW}}(u):=\inf\left\{\liminf_n I_{QW}(u_n):\ u_n\in BV(\Omega;\mathbb{R}^d),\ u_n \to u \hbox{ in }L^1\right\}}.$$

 We are now in position to establish the mentioned lemma and we notice that we make no assumptions on the quasiconvexified function $QW$.

\begin{Lemma}\label{FMnonquasiconvex}
Let $W:\Omega\times\mathbb{R}^d\times \mathbb{R}^{d\times N}\longrightarrow [0,+ \infty)$ be a continuous function and consider the functionals $I_W$ and $I_{QW}$ and their corresponding relaxed functionals defined as above. Then, if $W$ satisfies conditions (i) $\div$ (iv) of Theorem \ref{thmdec1}, the two relaxed functionals coincide in $BV(\Omega,\mathbb{R}^d)$ and moreover
$$\begin{array}{ll}
\displaystyle{\overline{I_{W}}(u)=\overline{I_{QW}}(u)=\int_\O QW(x,u(x),\nabla u(x))\,dx + \int_{J_u} \gamma(x, u^+, u^-, \nu_u)\,d {\cal H}^{N-1}+}\vspace{0.2cm}\\ \hspace{3cm}
\displaystyle{+\int_\O (QW)^\infty\left(x,u(x),\frac{d D^c u}{d|D^cu|}\right)d |D^c u|.}
\end{array}
$$
\end{Lemma}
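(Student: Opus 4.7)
The plan is to sandwich $\overline{I_W}$ between $\overline{I_{QW}}$ and the stated integral representation, so that all three coincide. The pivotal tool is \cite[Theorem 2.16]{Fonseca-Muller-relaxation} applied not to $W$ (which need not be quasiconvex) but to $QW$; this is precisely where the technical work already done in Propositions \ref{measQW} and \ref{propperH5} pays off.

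\textbf{Step 1: representation for $\overline{I_{QW}}$.} The function $QW$ is quasiconvex in the last variable by construction, continuous by the last assertion of Proposition \ref{measQW}, and satisfies the linear growth bound \eqref{H1QW}, the modulus of continuity \eqref{H4QW1}, and the one-sided lower-bound condition \eqref{H4QW2}. Proposition \ref{propperH5} supplies the analogue of (iv), in the equivalent form \eqref{ivequivalent}, for the pair $(QW,(QW)^\infty)$. Hence $QW$ meets all hypotheses of \cite[Theorem 2.16]{Fonseca-Muller-relaxation} (with surface density defined via $(QW)^\infty$ as in \eqref{gammaAFP}), and that theorem yields
\[
\overline{I_{QW}}(u) = \int_\Omega QW(x,u,\nabla u)\,dx + \int_{J_u} \gamma(x,u^+,u^-,\nu_u)\,d\mathcal{H}^{N-1} + \int_\Omega (QW)^\infty\!\left(x,u,\tfrac{dD^cu}{d|D^cu|}\right)d|D^cu|.
\]

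\textbf{Step 2: the trivial inequality.} Since $QW\le W$ pointwise, $I_{QW}\le I_W$ on $L^1(\Omega;\mathbb R^d)$, and passing to the relaxation preserves the inequality, giving $\overline{I_{QW}}(u)\le \overline{I_W}(u)$ for every $u\in BV(\Omega;\mathbb R^d)$.

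\textbf{Step 3: the reverse inequality $\overline{I_W}\le \overline{I_{QW}}$.} This is the main content. Fix $u\in BV(\Omega;\mathbb R^d)$ and pick a recovery sequence $\{u_n\}\subset W^{1,1}(\Omega;\mathbb R^d)$ with $u_n\to u$ in $L^1$ and $I_{QW}(u_n)\to \overline{I_{QW}}(u)$. The key auxiliary claim is a ``de-quasiconvexification'' lemma: for each $w\in W^{1,1}(\Omega;\mathbb R^d)$ there exists a sequence $\{w_k\}\subset W^{1,1}(\Omega;\mathbb R^d)$ with $w_k\to w$ in $L^1$ and
\[
\limsup_{k\to\infty}\int_\Omega W(x,w_k,\nabla w_k)\,dx \;\le\; \int_\Omega QW(x,w,\nabla w)\,dx.
\]
To prove this, approximate $w$ by piecewise affine maps on a fine partition of $\Omega$ into small cubes, and on each cube use a nearly optimal test function $\varphi\in W^{1,\infty}_0$ from the infimum formula \eqref{qcx-rel} for $QW$, glued by zero boundary data and extended periodically at a scale much finer than the cube. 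Condition (ii) of Theorem \ref{thmdec1} (and its analogue for $QW$ from Proposition \ref{measQW}) lets one freeze $(x,w)$ on each cube while absorbing the error by a modulus of continuity against the uniform $L^1$ bound on the correctors $\|\xi+\nabla\varphi\|_{L^1}\le c(1+|\xi|)$, which is available by Proposition \ref{measQW}. A standard diagonal extraction $v_n:=w_{k(n)}$ of the resulting doubly-indexed family, chosen so that $\|v_n-u_n\|_{L^1}\to 0$ and $\int W(x,v_n,\nabla v_n)\le \int QW(x,u_n,\nabla u_n)+1/n$, yields $v_n\to u$ in $L^1$ with $\limsup_n I_W(v_n)\le \overline{I_{QW}}(u)$, completing Step 3.

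\textbf{Where the difficulty lies.} Step 1 is essentially a citation check, and Step 2 is a one-liner. The genuine obstacle is the approximation lemma in Step 3 under the linear-growth, $(x,u)$-dependent setting: one has no $W^{1,p}$ ($p>1$) bound on $u_n$, so equi-integrability of $|\nabla v_n|$ is not automatic. The construction must therefore keep the oscillating correctors compactly supported inside each cube (so they only contribute $L^1$-mass controlled by $1+|\nabla u_n|$), and exploit the continuity estimate (ii), together with the recession control (iv) transferred to $QW$ via Propositions \ref{QWinfty=} and \ref{propperH5}, to match the $QW$-energy within arbitrarily small error. Once that lemma is in hand, the diagonal argument is routine.
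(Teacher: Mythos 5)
Your proposal matches the paper's proof almost step for step: the same trivial inequality $\overline{I_{QW}}\le\overline{I_W}$ from $QW\le W$, the same diagonal argument for the reverse inequality, and the same appeal to \cite[Theorem 2.16]{Fonseca-Muller-relaxation} applied to $QW$, justified by Propositions \ref{measQW} and \ref{propperH5}. The single point of divergence is your Step 3: where you sketch a from-scratch ``de-quasiconvexification'' construction (piecewise-affine approximation plus periodic correctors, frozen coefficients via condition (ii)), the paper simply cites \cite[Theorem 9.8]{Dacorogna} to produce, for each fixed $u_n\in W^{1,1}$, a sequence $u_{n,k}\rightharpoonup u_n$ in $W^{1,1}$ with $\int_\Omega W(x,u_{n,k},\nabla u_{n,k})\,dx\to\int_\Omega QW(x,u_n,\nabla u_n)\,dx$, and then diagonalizes exactly as you do. Your sketch is, in substance, the classical proof of that cited theorem, so this is not a genuinely different route — it just re-derives a tool the paper treats as known. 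Be aware that your sketch glosses over a few points Dacorogna's theorem packages away, chiefly that $u_n$ is only $W^{1,1}$ (so its piecewise-affine approximants need not be $L^\infty$-bounded, and condition (ii) with a fixed compact $K$ cannot be applied uniformly without a further splitting argument using the linear growth of $W$); these are standard but not entirely automatic. Overall: correct, same approach, modulo citation-versus-reproof of the intermediate relaxation.
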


\begin{proof} First we observe that $\overline{I_W}(u)=\overline{I_{QW}}(u)$, for every $u\in BV(\Omega;\mathbb{R}^d).$ Indeed, since $QW \leq W$, it results $\overline{I_{QW}}\leq\overline{I_W}$. Next we prove the opposite inequality in the nontrivial case that $\overline{I_{QW}}(u)<+\infty$. For fixed $\delta>0$, we can consider $u_n\in W^{1,1}(\Omega;\mathbb{R}^d)$ with $u_n \to u$ strongly in $L^1(\Omega;\mathbb R^d)$ and such that
$$
\displaystyle{\overline{I_{QW}}(u)\geq \lim_n \int_\Omega QW(x,u_n(x),\nabla u_n(x))\,dx - \delta.}
$$
Applying \cite[Theorem 9.8]{Dacorogna},
%(see also  the Relaxation Theorem in \cite[statement III.7]{Acerbi-Fusco} and \cite[Theorem 1.3]%{BFL} to have coincidence between seq. lsc envelope and $\inf \liminf$, we can either invoke %BFL or Dacorogna. I prefer to invoke Dacorogna book...the argument is self contained)
for each $n$ there exists a sequence $\{u_{n,k}\}$ converging to $u_n$ weakly in $W^{1,1}(\Omega;\mathbb R^d)$ such that
$$
\displaystyle{\int_\O QW(x, u_n(x), \nabla u_n(x))\,dx = \lim_k \int_\O W(x, u_{n,k}(x), \nabla u_{n,k}(x))\,dx.}
$$
Consequently
\begin{equation}\label{2.17BFMbend2}
\displaystyle{\overline{I_{QW}}(u)\geq \lim_n \lim_k \int_\O W(x, u_{n,k}(x), \nabla u_{n,k}(x))\,dx- \delta,}
\end{equation}
and
$$
\displaystyle{\lim_n\lim_k \| u_{n,k}- u\|_{L^1}=0.}
$$
Via a diagonal argument, there exists a sequence $\{u_{n, k_n}\}$ satisfying $u_{n,k_n} \to u$ in $L^1(\Omega;\mathbb R^d)$ and realizing the double limit in the right hand side of (\ref{2.17BFMbend2}).
Thus, it results
$$
\displaystyle{\overline{I_{QW}}(u) \geq \lim_n \int_\O W(x, u_{n ,k_n}(x), \nabla u_{n, k_n}(x))\,dx -\delta \geq \overline{I_{W}}(u)- \delta.}
$$
Letting $\delta$ go to $0$ the conclusion follows.

Finally we prove the integral representation for $\overline{I_{QW}}$ and consequently for $\overline{I_W}$. To this end we invoke \cite[Theorem 2.16]{Fonseca-Muller-relaxation} (see also \cite[Theorem 5.54]{AFP}).

By the hypotheses, and by Proposition \ref{measQW} above, $QW$ satisfies conditions (H1), (H2), (H3) and (H4) in \cite{Fonseca-Muller-relaxation}, and condition $(H5)$ follows from Proposition \ref{propperH5}. Applying \cite[Theorem 2.16]{ Fonseca-Muller-relaxation} we conclude the proof.
\end{proof}

Let $I_{QW+\varphi}:BV(\Omega;\mathbb R^d)\times L^q(\Omega;\mathbb R^m)\to \mathbb R \cup \{+\infty\}$ be the functional defined by
\begin{equation}\label{IQWvarphi}I_{QW+\varphi}(u,v):=
\left\{ \begin{array}{ll}
\displaystyle{\int_\O QW(x, u(x), \nabla u(x))\,dx + \int_\Omega \varphi(x,u(x),v(x))\,dx}\\
\\
\displaystyle{\quad \quad \quad\hbox{ if } (u,v)\in (W^{1,1}(\O;\mathbb{R}^d)\cap L^p(\Omega;\mathbb{R}^d))\times L^q(\O;\mathbb{R}^m),}\vspace{0.2cm}\\+\infty\quad\text{otherwise},\end{array}\right.
\end{equation}
and its relaxed functional as
\begin{equation}\label{IbarQWvarphi}
\begin{array}{ll}
\displaystyle{\overline{I_{QW+\varphi}}(u,v):=}\\
\\
\;\;\displaystyle{\inf\{\liminf_{n}I_{QW+\varphi}(u_n,v_n): (u_n,v_n)\in BV(\Omega;\mathbb R^d)\times L^q(\Omega;\mathbb R^m), u_n \to u \hbox{ in }L^1, v_n \rightharpoonup v \hbox{ in }L^q\}.}
\end{array}
\end{equation}
We can obtain, as in the first part of the proof of Lemma \ref{FMnonquasiconvex}, the following result.
\begin{Corollary}\label{Corollary3.2}
Let $p \geq 1$, $q \geq 1$  and let $\Omega \subset \mathbb R^N$. Assume $W:\Omega \times \mathbb R^d \times \mathbb R^{d \times N} \to [0, +\infty)$ and $\varphi:\Omega \times \mathbb R^d \times \mathbb R^m \to [0, +\infty)$ satisfying $(i) \div (iv)$ of Theorem \ref{thmdec1} and $(v)$ of Theorem \ref{thmdec1} respectively. Let $I$ and $\overline{I}$ be defined by \eqref{firstmodel} and \eqref{relaxedfunctional1} respectively. Let $I_{QW+ \varphi}$ and $\overline{I_{Qw+ \varphi}}$ be as in \eqref{IQWvarphi} and \eqref{IbarQWvarphi} respectively, then
$$
\overline{I}(u,v)=\overline{I_{QW+\varphi}}(u,v)
$$
for every $(u,v)\in BV(\Omega;\mathbb R^d)\times L^q(\Omega;\mathbb R^m)$.
\end{Corollary}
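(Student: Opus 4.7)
The plan is to mimic the first half of Lemma \ref{FMnonquasiconvex}, carrying the $\varphi$-term along throughout the approximation. The trivial inequality $\overline{I_{QW+\varphi}}(u,v)\leq \overline{I}(u,v)$ is immediate from $QW\leq W$, which yields $I_{QW+\varphi}\leq I$ on their common finite domain. Hence all the content sits in the reverse inequality, and I may assume $\overline{I_{QW+\varphi}}(u,v)<+\infty$. Fixing $\delta>0$, I would pick an almost-minimizing sequence $(u_n,v_n)\in (W^{1,1}(\Omega;\mathbb R^d)\cap L^p(\Omega;\mathbb R^d))\times L^q(\Omega;\mathbb R^m)$ with $u_n\to u$ in $L^1$ and $v_n\rightharpoonup v$ in $L^q$, realizing
\begin{equation*}
\overline{I_{QW+\varphi}}(u,v)\geq \lim_n I_{QW+\varphi}(u_n,v_n)-\delta.
\end{equation*}

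The key step, for each fixed $n$, is to freeze $v_n$ and package both integrands into a single Carath\'eodory function
\begin{equation*}
f_n(x,u,\xi):=W(x,u,\xi)+\varphi(x,u,v_n(x)),
\end{equation*}
whose $\xi$-quasiconvex envelope, since $\varphi$ does not depend on $\xi$, is exactly $Qf_n(x,u,\xi)=QW(x,u,\xi)+\varphi(x,u,v_n(x))$. The $(u,\xi)$-growth of $f_n$ is of the required mixed type ($|u|^p$ plus linear in $|\xi|$), with the $x$-dependent contribution $C(1+|v_n(x)|^q)$ lying in $L^1(\Omega)$. Applying \cite[Theorem 9.8]{Dacorogna} to $f_n$ exactly as in the proof of Lemma \ref{FMnonquasiconvex}, one obtains a sequence $\{u_{n,k}\}_k$ with $u_{n,k}\rightharpoonup u_n$ weakly in $W^{1,1}$ and, up to a subsequence, strongly in $L^1$ and a.e., such that
\begin{equation*}
\lim_k\int_\Omega\bigl[W(x,u_{n,k},\nabla u_{n,k})+\varphi(x,u_{n,k},v_n)\bigr]\,dx=I_{QW+\varphi}(u_n,v_n).
\end{equation*}
The $L^p$-coercivity in (v) of Theorem \ref{thmdec1} then forces $u_{n,k}\in L^p(\Omega;\mathbb R^d)$ for $k$ large, so each pair $(u_{n,k},v_n)$ is admissible in the definition \eqref{relaxedfunctional1} of $\overline{I}$.

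A standard diagonal extraction $k=k_n$ next produces $u_{n,k_n}\to u$ in $L^1$ together with
\begin{equation*}
\limsup_n I(u_{n,k_n},v_n)\leq \lim_n I_{QW+\varphi}(u_n,v_n),
\end{equation*}
and since $v_n\rightharpoonup v$ in $L^q$ the pair $(u_{n,k_n},v_n)$ is admissible in \eqref{relaxedfunctional1}. This gives $\overline{I}(u,v)\leq \overline{I_{QW+\varphi}}(u,v)+\delta$, and sending $\delta\to 0$ closes the argument.

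The main obstacle, in my view, is the simultaneous control of both integrands under a single approximation: a direct use of the relaxation theorem on $W$ alone recovers $QW$, but leaves $\int\varphi(x,u_{n,k},v_n)\,dx$ to be analyzed separately, which is uncomfortable because $u_{n,k}\rightharpoonup u_n$ only weakly in $W^{1,1}$ and need not converge strongly in $L^p$. Packaging $\varphi(\,\cdot\,,\,\cdot\,,v_n)$ into $f_n$ bypasses this difficulty in one stroke, exploiting the fact that a $\xi$-independent Carath\'eodory summand is its own $\xi$-quasiconvex envelope.
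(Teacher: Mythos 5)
Your proposal is correct and follows essentially the route the paper intends, namely the argument of the first half of Lemma~\ref{FMnonquasiconvex}: an almost-minimizing sequence $(u_n,v_n)$ for $\overline{I_{QW+\varphi}}$, an application of the Sobolev relaxation theorem \cite[Theorem~9.8]{Dacorogna} for each fixed $n$, and a diagonal extraction combined with the trivial inequality coming from $QW\le W$. The one refinement you add is to absorb the frozen term $\varphi(\cdot,\cdot,v_n(\cdot))$ into a single Carath\'eodory integrand $f_n$, so that $Qf_n=QW+\varphi(\cdot,\cdot,v_n)$ and both integrals are recovered simultaneously; this is a sensible way to make the paper's terse ``as in the first part of Lemma~\ref{FMnonquasiconvex}'' fully explicit, since the recovery sequence $u_{n,k}$ converges only in $L^1$ and the convergence of $\int_\Omega\varphi(x,u_{n,k},v_n)\,dx$ would otherwise require a separate uniform-integrability argument.
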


\begin{Remark}\label{Remark3.3}

We observe that, in the case $1 \leq p < +\infty$, $1 < q < \infty$, given $W:\Omega \times\mathbb R^d\times \mathbb R^{d \times N}\to \mathbb R$ and $\varphi:\Omega \times \mathbb R^d \times\mathbb R^m \to\mathbb R$, Carath\'eodory functions satisfying $(i)$ and $(v)$ of Theorem \ref{thmdec1} respectively, then, if one can provide that $C \varphi$ is still Carath\'eodory, an argument entirely similar to the first part of Lemma \ref{FMnonquasiconvex}, entails that
$$
\begin{array}{ll}
\displaystyle{\overline{I}(u,v)=\inf\left\{\liminf_{n \to + \infty} \int_\Omega (QW(x,u_n, \nabla u_n)+ C\varphi(x, u_n, v_n))\,dx: \right.} \\
\\
\;\;\;\;\;\;\;\;\;\;\;\;\;\;\;\;\;\;\;\;\;\displaystyle{\left.(u_n, v_n) \in BV(\Omega;\mathbb R^d)\times L^q(\Omega;\mathbb R^m), u_n \to u \hbox{ in }L^1, v_n \rightharpoonup v \hbox{ in } L^q \right\}}
\end{array}
$$
where $\overline{I}$ is the functional defined by \eqref{relaxedfunctional1}, $QW$ and $C\varphi$
 are defined in \eqref{qcx-rel} and \eqref{cx-rel}.
But we emphasize that since, assuming only $(v)$ of Theorem \ref{thmdec1} there may be a lack of continuity of $C\varphi(x,\cdot,\cdot)$ as observed in Remark \ref{measCvarphi}, we focus just on the relaxation of the term $\int_\Omega W(x,u,\nabla u)\,dx$ and we prove Lemma \ref{FMnonquasiconvex} (see also Corollary \ref{Corollary3.2}) in order to be allowed to assume $W$ quasiconvex without loosing generality.

\end{Remark}

We are now in position to prove Theorem \ref{thmdec1}.

%{\bf controllare bene e scrivere quali siano le ipotesi ereditate da $QW$ e $C\varphi$ per la %rimozione di $Q$ e $C$....}

%{\bf dopo la proof del teorema inserire un remark in cui si dice che $\varphi$ non si puo assumere %convex per $q=1$ anche se c'e' il risultato di CARIZA2}

\begin{proof}[Proof of Theorem \ref{thmdec1}]

The proof is divided in two parts. First we consider the case $q>1$ and then we consider $q=1$. In both cases we first prove a lower bound for the relaxed energy $\overline{I}$ and then we prove that the lower bound obtained is also an upper bound for $\overline{I}$.
\medskip

Preliminarly we observe that by virtue of Corollary \ref{Corollary3.2}, Propositions \ref{measQW}, \ref{QWinfty=}, \ref{propperH5} we can assume without loss of generality, that $W$ is quasiconvex in the last variable.
\medskip

\noindent{\sl Part 1: $q>1$.}

\noindent{\sl Lower bound.} Let $u\in BV(\O;\mathbb{R}^d)\cap L^p(\O;\mathbb{R}^d)$ and let $v\in L^q(\O;\mathbb{R}^m)$. We will prove that, for any sequences $u_n\in BV(\O;\mathbb{R}^d)$ and $v_n\in L^q(\O;\mathbb{R}^m)$ such that $u_n \to u$ in $L^1$ and $v_n \weak v$ in $L^q$,
$$\begin{array}{l}\displaystyle{\liminf_{n\to +\infty}I(u_n,v_n)\ge \int_\O W(x,u,\nabla u)\,dx + \int_{J_u}\gamma(x, u^+, u^-, \nu_u)\,d{\cal H}^{N-1}+\int_\O W^\infty\left(x,u,\frac{d D^c u}{d|D^cu|}(x)\right)d |D^c u|+}\vspace{0.2cm}\\ \hspace{3cm}\displaystyle{+\int_\O C\varphi(x,u,v)\,dx.}\end{array}$$

\noindent Let $u_n$ and $v_n$ be two sequences in the conditions described above. Then, by \cite[Theorem 2.16]{Fonseca-Muller-relaxation}
\begin{equation}\label{eq1.1.1}
\begin{array}{ll}
\displaystyle{
\int_\O W(x,u,\nabla u)\,dx + \int_{J_u}\gamma(x,u^+,u^-, \nu_u)\,d{\cal H}^{N-1}+ \int_\O W^\infty\left(x,u, \frac{d D^c u}{d|D^cu|}\right)d |D^c u|  \leq} \\
\\
\displaystyle{\quad \quad \quad \quad \leq \liminf_{n \to + \infty}\int_\O W(x,u_n,\nabla u_n)\,dx.}
\end{array}
\end{equation}

\noindent Moreover, since we can assume $\liminf_n \int_\Omega \varphi(x,u_n,v_n)\, dx < +\infty$, the bound  on $\|u_n\|_{L^p}$ provided by $(v)$, the fact that $u_n \to u$ in $L^1(\Omega)$ and consequently pointwise, guarantee that $u_n \to u$ strongly in $L^p$. Furthermore $v_n \weak v$ weakly in $L^q$ and because of the lower semi-continuity of $ C\varphi(x, \cdot, \cdot)$ (cf. \cite[Lemma 4.3]{DM-F-L-M}), it results (cf. \cite[Theorem 7.5]{Fonseca-Leoni} or \cite{Ekeland-Temam})
\begin{equation}\label{eq1.1.2}
\int_\O C \varphi(x,u,v)\,dx \leq \liminf_{n \to + \infty} \int_\O  C\varphi (x, u_n, v_n)\,dx \leq \liminf_{n \to + \infty} \int_\O \varphi(x, u_n, v_n)\,dx.
\end{equation}
\noindent Consequently, the superadditivity of the $\liminf$, gives the desired lower bound.

\medskip

\noindent{\sl Upper bound.} Let $u\in BV(\O;\mathbb{R}^d) \cap L^p(\Omega;\mathbb R^d)$ and $v\in L^q(\O;\mathbb{R}^m)$. We will prove that
\begin{equation}\label{upperboundforever}
\begin{array}{ll}
\displaystyle{\overline{I}(u,v) } &\displaystyle{\leq \int_\O W(x,u,\nabla u)\,dx + \int_{J_u}\gamma(x,u^+, u^-, \nu_u)\,d{\cal H}^{N-1}}+\vspace{0.2cm}
\\
&\displaystyle{+\int_\O W^\infty\left(x,u,\frac{d D^c u}{d|D^cu|}\right)d |D^c u|+ \int_\O C\varphi (x,u,v)\,dx}
\end{array}
\end{equation}
constructing convenient sequences $u_n\in BV(\Omega;\mathbb{R}^d)$ such that $u_n\rightarrow u$ in $L^1$ and $v_n\in L^{q}(\Omega,\mathbb{R}^m)$ such that $v_n\rightharpoonup v$ in $L^q$.

\noindent We can assume, without loss of generality, that
\begin{equation}\label{finite-assumption}
\begin{array}{ll}
\displaystyle{\int_\O W(x,u,\nabla u)\,dx + \int_{J_u}\gamma(x, u^+, u^-, \nu_u)\,d{\cal H}^{N-1}+\int_\O W^\infty\left(x,u,\frac{d D^c u}{d|D^cu|}\right)d |D^c u| +}
\\
\\
\displaystyle{ + \int_\O  C\varphi (x,u,v)\,dx<+\infty.}
\end{array}
\end{equation}
In particular, from $(v)$ it follows that $u\in L^p(\Omega;\mathbb{R}^d)$.

\noindent Moreover we suppose, without loss of generality, that $W\ge 0$ and $\varphi \ge 0$. We will consider two cases.\medskip

\noindent{\sl Case 1:} $u\in L^\infty(\O;\mathbb{R}^d)$.

Fix $M\in\mathbb{N}$. We will prove that, for some constant $c$ (independent of $M$), $$
\begin{array}{ll}
\displaystyle{\overline{I}(u,v)}&\displaystyle{\le \int_\O W(x,u,\nabla u)\,dx +\int_{J_u}\gamma(x, u^+,u^-, \nu_u)\,d{\cal H}^{N-1}+ \int_\O W^\infty\left(x,u,\frac{d D^c u}{d|D^c u|}\right)d |D^c u| +}\\
\\
&\displaystyle{+ \int_\O C \varphi (x,u,v)\,dx+\frac{c}{M}.}
\end{array}$$
Then we get the desired inequality by letting $M$ go to $+\infty$.

\noindent We proceed in three steps.\medskip

\noindent{\sl Case 1, step 1:} construction of a convenient sequence converging to $u$ in $L^1(\Omega;\mathbb{R}^d)$.

\noindent Let $\{u_n\}$ be a sequence in $W^{1,1}(\O;\mathbb{R}^d)$ such that $u_n \to u$ in $L^1$ and
$$
\begin{array}{ll}
\displaystyle{\lim \int_\O W(x,u_n,\nabla u_n)\,dx= }\\
\\=\displaystyle{\int_\O W(x,u,\nabla u)\,dx + \int_{J_u} \gamma(x,u^+,u^-, \nu_u)\,d{\cal H}^{N-1}+\int_\O W^\infty\left(x,u,\frac{d D^c u}{d|D^c u|}\right)d |D^c u|.}
\end{array}
$$
This exists by  \cite[Theorem 2.16]{Fonseca-Muller-relaxation}. Next we will truncate the sequence $u_n$.

\noindent Fix $k$ such that $e^{k}-1>2||u||_{L^\infty}$. Then, hypothesis (\ref{finite-assumption}) together with the coercivity condition of $W$ on $\xi$, cf. $(i)$, and the fact that $ \varphi \ge 0$, imply that  $\sup||\nabla u_n||_{L^1}$ is bounded by a constant independent of the sequence $u_n$. Thus
$$
\begin{array}{ll}
\displaystyle{\sum_{i=0}^{M-1}\int_{\{x\in\Omega:\ k+i\le \ln(1+|u_n|)<k+i+1\}}(1+|\nabla u_n|)\,dx} &\displaystyle{=\int_{\{x\in\Omega:\ k\le \ln(1+|u_n|)<k+M\}}(1+|\nabla u_n|)\,dx }\\
\\
&\displaystyle{\le|\O|+\sup_n||\nabla u_n||_{L^1}}
\end{array}$$ and so, for each $n\in\mathbb{N}$, we can find $i=i(n)\in\{0,...,M-1\}$ such that
\begin{equation}\label{controlling-the-gradient}
\int_{\{x\in\Omega:\ k+i\le \ln(1+|u_n|)<k+i+1\}}(1+|\nabla u_n(x)|)\,dx\le \frac{|\O|+\sup_n||\nabla u_n||_{L^1}}{M}.\end{equation}
For each $n$, and accordingly to the previous choice of $i(n)$, consider $\tau_n:\mathbb{R}_0^+\longrightarrow[0,1]$ such that $\tau_n\in C^1(\mathbb{R}_0^+)$, $|\tau_n'|\le 1$, $$\tau_n (t)=1,\ \text{if}\ 0\le t< k+i(n)\qquad\text{and}\qquad\tau_n (t)=0,\ \text{if}\ t\ge k+i(n)+1.$$
We can now define the truncated sequence. Let $g_n(z):=\tau_n(\ln(1+|z|))\,z$, and $\overline{u}_n(x)= g_n(u_n)$.
Since in a neighborhood of $0$ the function $\tau_n(\ln(1+|\cdot|))$ is identically $1$, $g_n$ is a Lipschitz, $C^1$ function with
$$
\nabla g_n(z)=\left\{
\begin{array}{l}
\tau_n(\ln(1+|z|))\,{\mathbb I}+\tau_n'(\ln(1+|z|))\frac{1}{1+|z|}\frac{z\otimes z}{|z|},\ \hbox{if }z\neq 0\vspace{0.2cm}\\ {\mathbb I},\ \text{if }z=0\end{array}\right.$$
and $|\nabla g_n(z)|\le c$. So, by Theorem 3.96 in \cite{AFP}, $\overline{u}_n\in W^{1,1}(\O; \mathbb{R}^d)$,
$\nabla\overline{u}_n=\nabla g_n(u_n)\nabla u_n\mathcal{L}^N$ and
%+(g_n(u_n^+)-g_n(u_n^-))\otimes \nu_u\mathcal{H}^{N-1}_{|J_{u_n}}+\nabla g_n(\tilde{u}_n)D^c u_n$ and
$|\nabla\overline{u}_n|\le c|\nabla u_n|$ which is bounded in $L^1$ as observed above.
%because $u_n\rightharpoonup u$ in $BV *$.
Moreover $||\overline{u}_n||_{L^\infty}\le e^{k+i(n)+1}-1\le e^{k+M}-1$ and $\overline{u}_n\rightarrow u$ in $L^1$. Indeed, if $u\equiv0$ then $||\overline{u}_n||_{L^1}\le ||u_n||_{L^1}\rightarrow 0$. If not
$$
\begin{array}{rcl}
\displaystyle{||\overline{u}_n-u||_{L^1(\O)}} & = & \displaystyle{\int_{\{x\in\O:\ 0\le \ln(1+|u_n|)<k+i(n)\}} |u_n(x)-u(x)|\,dx+}\vspace{0.2cm}\\ & &\displaystyle{+\int_{\{x\in\O:\ k+i(n)\le \ln(1+|u_n|)<k+i(n)+1\}} |\overline{u}_n(x)-u(x)|\,dx+}\vspace{0.2cm}\\ & &\displaystyle{
+\int_{\{x\in\O:\ \ln(1+|u_n|)\ge k+i(n)+1\}} |u(x)|\,dx}\vspace{0.2cm}\\& \le & \displaystyle{||u_n-u||_{L^1(\O)}+\int_{\{x\in\O:\ k+i(n)\le \ln(1+|u_n|)<k+i(n)+1\}} |\overline{u}_n(x)-u_n(x)|\,dx+} \vspace{0.2cm}\\& & +||u_n-u||_{L^1(\O)}+||u||_{L^\infty(\O)}\,\displaystyle{\left|\{x\in\O: \ln(1+|u_n|)\ge k+i(n)+1\}\right|} \vspace{0.2cm}\\& \le & \displaystyle{2||u_n-u||_{L^1(\O)}+\int_{\{x\in\O:\ k+i(n)\le \ln(1+|u_n|)<k+i(n)+1\}} |u_n(x)|\,dx+ }\vspace{0.2cm}\\& & +||u||_{L^\infty(\O)}\,\displaystyle{\left|\{x\in\O:\ \ln(1+|u_n|)\ge k+i(n)+1\}\right|} \vspace{0.2cm} \end{array}$$
these last terms converging to zero because $u_n\rightarrow u$ in $L^1$ and because of the following estimates:

$$
\begin{array}{rcl}
\displaystyle{\int_{\{x\in\O:\ k+i(n)\le \ln(1+|u_n|)<k+i(n)+1\}} |u_n(x)|\,dx} & \le & \displaystyle{\int_{\{x\in\O:\ k+i(n)\le \ln(1+|u_n|)<k+i(n)+1\}} e^{k+M}-1\,dx}\vspace{0.2cm} \\ & \le & (e^{k+M}-1) \left|\{x\in\O: |u_n|\ge e^{k+i(n)}-1\}\right| \vspace{0.2cm}\\ & \le & (e^{k+M}-1) \left|\{x\in\O:\ |u_n-u|\ge ||u||_{L^\infty(\O)}\}\right| \vspace{0.2cm}\\ & \le & \displaystyle{(e^{k+M}-1)\frac{||u_n-u||_{L^1(\O)}}{||u||_{L^\infty(\O)}}}, \end{array}$$

$$
\begin{array}{rcl}
\displaystyle{\left|\{x\in\O: \ln{(1+|u_n|)}\ge k+i(n)+1 \}\right|}& = & \displaystyle{\left|\{x\in\O: |u_n|\ge e^{k+i(n)+1}-1\}\right|} \vspace{0.2cm}\\
& \le & \displaystyle{\left|\{x\in\O: |u_n-u|\ge ||u||_{L^\infty(\O)}\}\right|}
\vspace{0.2cm}\\
& \le & \displaystyle{\frac{||u_n-u||_{L^1(\O)}}{||u||_{L^\infty(\O)}}}.
\end{array}
$$

\noindent So, we have, in particular, that $\overline{u}_n$ converges to $u$ in $L^1$ and $\overline{u}_n$ clearly belongs to $L^p(\Omega;\mathbb{R}^d)$.\medskip

\noindent{\sl Case 1, step 2:} construction of a convenient sequence $\{v_n\}$ weakly converging to $v$ in $L^q$.

We have, by $(v)$, \cite[Theorem 6.68 and Remark 6.69 $(ii)$]{Fonseca-Leoni}, for any $w\in L^1(\O;\mathbb{R}^d)$

$$\int_\O C\varphi(x,w,v)\,dx=\inf \left\{\liminf_{n \to + \infty} \int_\O \varphi(x,w,v_n)\,dx: \{v_n\}\subset L^q(\O; \mathbb R^m), v_n \weak v  \hbox{ in } L^q \right\}$$ whenever the second term is finite.

\noindent Since $q>1$ and thus $L^{q'}(\O;\mathbb{R}^m)$ is separable, we can consider a sequence $\{\psi_l\}$ of functions, dense in $L^{q'}(\O;\mathbb{R}^m)$.

\noindent Then, for each $n\in\mathbb{N}$ let $v^n_j\in L^q(\O;\mathbb{R}^m)$ be such that

$$\int_\O C \varphi(x,\overline{u}_n,v)\,dx=\lim_{j \to + \infty} \int_\O \varphi(x,\overline{u}_n,v^n_j)\,dx$$ and

$$\lim_{j \to + \infty} \int_\O (v^n_j-v)\psi_l\,dx=0,\ \forall\ l\in\mathbb{N}.$$

\noindent We then extract a diagonalizing sequence ${v_n}$ in the following way: for each $n\in\mathbb{N}$ consider $j(n)$ increasing and verifying

$$\left |\int_\O \left(\varphi(x,\overline{u}_n,v^n_{j(n)})-C \varphi (x,\overline{u}_n,v)\right)\,dx\right |\le \frac{1}{n}$$

$$\left |\int_\O (v^n_{j(n)}-v)\psi_l\,dx\right|\le \frac{1}{n},\ l=1,...,n.$$ Define then $v_n=v^n_{j(n)}$. We have $v_n$ bounded in the $L^q$ norm:
$$\int_\O |v_n|^q\,dx\le C\int_\O\varphi(x,\overline{u}_n,v_n)\,dx\le \frac{C}{n}+C\int_\O C \varphi(x,\overline{u}_n,v)\,dx\le C+C\int_\O\varphi(x,\overline{u}_n,v)\,dx$$ this last term being bounded because $\overline{u}_n$ is a bounded sequence in $L^\infty$ and because of the growth condition $(v)$ on $\varphi$.

%$\overline{u}_n$ is bounded ($|\overline{u}_n|\le e^{k+M}$) and by hypothesis %(\ref{varphicondition}).

\noindent Moreover the density of ${\psi_l}$ in $L^{q'}$ ensures that $v_n \weak v$ in $L^q$. Indeed, let $\psi\in L^q(\O;\mathbb{R}^m)$ and let $\delta>0$. Consider $l\in\mathbb{N}$ such that $||\psi_l-\psi||_{L^{q'}}\le\delta$. Then, for sufficiently large $n$, $$\left |\int_\O (v_n-v)\psi\,dx\right |\le\left |\int_\O (v_n-v)(\psi-\psi_l)\,dx\right |+\left |\int_\O (v_n-v)\psi_l\,dx\right |\le ||v_n-v||_{L^q} ||\psi_l-\psi||_{L^{q'}}+\delta\le c\delta+\delta.$$

\noindent{\sl Case 1, step 3:} upper bound for $\overline{I}$.

%We have, by \cite[Theorem 6.68]{Fonseca-Leoni}, for any $w\in L^1(\O;\mathbb{R}^d)$

%$$\int_\O \varphi (x,w,v)\,dx=\inf \left\{\liminf_{n \to + \infty} \int_\O \varphi(x,w,v_n)\,dx: %\{v_n\}\subset L^q(\O; \mathbb R^m), v_n \weak v  \hbox{ in } L^q \right\}$$ whenever the %second term is finite.

\noindent Start  remarking that

$$\limsup_{n \to + \infty} \int_\O \varphi(x,\overline{u}_n,v_n)\,dx\le\int_\O  C\varphi (x,u,v)\,dx.$$ Indeed,
$$\begin{array}{rcl}\displaystyle{\limsup_{n \to + \infty} \int_\O \varphi(x,\overline{u}_n,v_n)\,dx}&=&\displaystyle{\limsup_{n \to + \infty} \int_\O \left(\varphi(x,\overline{u}_n,v_n)-C \varphi(x,\overline{u}_n,v)+C \varphi(x,\overline{u}_n,v)\right)\,dx}\vspace{0.2cm}\\& \le& \displaystyle{\limsup_{n \to + \infty}\left( \frac{1}{n}+ \int_\O C \varphi(x,\overline{u}_n,v)\,dx	\right).}\end{array}$$

\noindent As observed in Remark \ref{measCvarphi}, $ C\varphi(x,\cdot,v)$ is upper semi-continuous. By the pointwise convergence of $\overline{u}_n$ towards $u$ (up to a subsequence), we have
$$\limsup_{n \to + \infty}  C\varphi(x,\overline{u}_n,v)\le  C\varphi(x,u,v).$$ Moreover the fact that $\overline{u}_n$ is bounded in $L^\infty$ and the  hypothesis $(v)$ allows to apply the ``inverted'' Fatou's lemma and get the desired inequality.

\noindent Now we have
$$\begin{array}{rcl}\displaystyle{\int_\O W(x,\overline{u}_n,\nabla \overline{u}_n)\,dx} & = & \displaystyle{\int_{\{x\in\O:\ 0\le\ln(1+|u_n|)<k+i(n)\}} W(x,u_n,\nabla u_n)\,dx+}\vspace{0.2cm}\\ &&\displaystyle{+\int_{\{x\in\O:\ k+i(n)\le \ln(1+|u_n|)<k+i(n)+1\}} W(x,\overline{u}_n,\nabla \overline{u}_n)\,dx+}\vspace{0.2cm}
\\ &&\displaystyle{+\int_{\{x\in\O:\ \ln(1+|u_n|)\ge k+i(n)+1\}} W(x,0,0)\,dx}\vspace{0.2cm}
\\& \le & \displaystyle{\int_\O W(x,u_n,\nabla u_n)\,dx+\int_{\{x\in\O:\ k+i(n)\le \ln(1+|u_n|)<k+i(n)+1\}} C(1+|\nabla \overline{u}_n|)\,dx+ }\vspace{0.2cm}\\& & +C\,\displaystyle{\left|\{x\in\O:\ \ln(1+|u_n|)\ge k+i(n)+1\}\right|} \vspace{0.2cm}\end{array}$$
(where it has been used the growth condition (i). Using the expression of $\overline{u}_n$, by \cite[Theorem 3.96]{AFP}, we have $|\nabla \overline{u}_n|\le c|\nabla u_n|$ and so, using (\ref{controlling-the-gradient}), we get
$$\limsup_{n \to + \infty}\int_{\{x\in\O:\ k+i(n)\le \ln(1+|u_n|)<k+i(n)+1\}} C(1+|\nabla \overline{u}_n|)\,dx\le c\,\frac{|\O|+\sup||\nabla u_n||_{L^1}}{M}=\frac{c}{M}$$ (note that $c$ is independent of $n$ and of the sequence $u_n$, and it doesn't represent always the same constant).

\noindent Moreover, since $\displaystyle{\left|\{x\in\O:\ \ln(1+|u_n|)\ge k+i(n)+1\}\right|}\rightarrow 0$ as $n\rightarrow +\infty$ (as already seen in the case where $||u||_{L^\infty}\neq 0$) we get,
$$\limsup_{n \to + \infty}\int_\O W(x,\overline{u}_n,\nabla \overline{u}_n)\,dx\le \lim_{n \to + \infty}\int_\O W(x,u_n,\nabla u_n)\,dx+\frac{c}{M}.$$
Note that if $u=0$ we can still get $\displaystyle{\left|\{x\in\O:\ \ln(1+|u_n|)\ge k+i(n)+1\}\right|}\rightarrow 0$:

$$\displaystyle{\left|\{x\in\O:\ \ln(1+|u_n|)\ge k+i(n)+1\}\right|} \le \displaystyle{\left|\{x\in\O:\ |u_n|\ge e^{k+1}-1\}\right|} \le \frac{||u_n||_{L^1}}{e^{k+1}-1}\rightarrow 0$$ since $u_n\rightarrow 0$ in $L^1$.

\noindent Finally, we get, as desired,
$$\begin{array}{rcl}\overline{I}(u,v)&\le& \displaystyle{\liminf_{n \to + \infty} \int_\O W(x,\overline{u}_n,\nabla \overline{u}_n)\,dx + \int_\O \varphi(x,\overline{u}_n,v_n)\,dx}\vspace{0.2cm}\\& \le & \displaystyle{\limsup_{n \to + \infty} \int_\O W(x,\overline{u}_n,\nabla \overline{u}_n)\,dx + \limsup_{n \to + \infty} \int_\O \varphi(x,\overline{u}_n,v_n)\,dx}\vspace{0.2cm}\\& \le & \displaystyle{\int_\O W(x, u, \nabla u)\,dx +
\int_{J_u}\gamma(x,u^+,u^-, \nu_u)\,d{\cal H}^{N-1}+ \int_\O W^\infty\left(x,u,\frac{dD^c u}{d|D^cu|}\right)d |D^c u|+}\vspace{0.2cm}\\
& \quad &\displaystyle{+\frac{c}{M}+\int_\O  C\varphi(x,u,v)\,dx.}\vspace{0.2cm}
\end{array}$$\smallskip

\noindent{\sl Case 2:} arbitrary $u\in BV(\O;\mathbb{R}^d)\cap L^p(\Omega;\mathbb{R}^d)$.

\noindent To achieve the upper bound on this case, we will reduce ourselves to Case 1 by means of a truncature argument developed in \cite[Theorem 2.16, Step 4]{Fonseca-Muller-relaxation}, in turn inspired by \cite[Theorem 4.9]{Ambrosio-Mortola-Tortorelli}. We reproduce the same argument as in \cite{Fonseca-Muller-relaxation} for the reader's convenience.

\noindent Let $\phi_n \in C^1_0(\mathbb R^d;\mathbb R^d)$ be such that
$$
\phi_n(y)=y \hbox{ if } y \in B_n(0), \;\; \|\nabla \phi_n\|_{L^\infty}\leq 1,
$$
and fix $u \in BV(\Omega;\mathbb R^d)\cap L^p(\Omega;\mathbb R^d)$.

%\noindent The chosen norm  for matrices is
%$$
%\|A\|:= \sup\{|Ax|: |x|\leq 1\},
%$$
%and that guarantees that $\|\nabla \phi_n\|_{L^\infty}\leq 1$.
As proven in \cite[Theorem 4.9]{Ambrosio-Mortola-Tortorelli}, directly from the definitions and properties for the approximate discontinuity set and the triplets $(u^+, u^-, \nu_u)$ (see Subection 2.2), it results that
\noindent
$$
\begin{array}{ll}
 J_{\phi_n(u)}\subset J_u,\\
\\
(\phi_n(u)^+, \phi_n(u)^-, \nu_{\phi_n(u)})=(\phi_n(u^+), \phi_n(u^-), \nu_u) \hbox{ in } J_{\phi_n(u)}.
\end{array}
$$
Moreover one has
\begin{equation}\label{5.23FM}
|D\phi_n(u)|(B) \leq |D(u)|(B), \hbox{ for every Borel set } B\subset \Omega.
\end{equation}
Consequently
$$
\phi_n(u) \in BV(\Omega;\mathbb R^d)\cap L^\infty(\Omega;\mathbb R^d).
$$
\noindent Since $\phi_n(u) \to u$ in $L^1$, by the lower semicontinuity of $\overline{I}$ (since $q>1$)  and by Case 1 we get
$$
\begin{array}{ll}
\displaystyle{\overline{I}(u,v)\leq \liminf_{n \to + \infty}\left[\int_\Omega W(x, \phi_n(u), \nabla \phi_n(u))\,dx +\int_{J_{\phi_n(u)}}\gamma (x, \phi_n(u)^+, \phi_n(u)^-, \nu_{\phi_n(u)})\,d {\cal H}^{N-1}+\right.}\\
\\
\displaystyle{\left.\;\;\;\;\quad\quad+\int_\Omega W^\infty\left(x, \phi_n(u),\frac{d D^c(\phi_n(u))}{d |D^c(\phi_n(u))|}\right)d |D^c\phi_n(u)|+\int_\Omega C\varphi(x, \phi_n(u), v)\,dx\right].}
\end{array}
$$

\noindent By the upper semicontinuity of $\gamma$ in all of its arguments as stated in \cite[(c) of Lemma 2.15]{Fonseca-Muller-relaxation} and by the fact that $\gamma(x,a,b,\nu)\leq C|a-b|$ for every $(x,a,b,\nu) \in \Omega \times \mathbb R^d \times \mathbb R^d \times S^{N-1}$ (see \cite[(d) Lemma 2.15]{Fonseca-Muller-relaxation}) and the properties of $\phi_n$ we have
$$
\gamma(x, \phi_n(u^+), \phi_n(u^-),\nu_u) \leq C|u^+ - u^-|,
$$
and so, by Fatou's Lemma we obtain
$$
\displaystyle{\limsup_{n\to +\infty} \int_{J_{\phi_n(u)}} \gamma(x, \phi_n(u)^+, \phi_n(u)^-, \nu_u)\,d {\cal H}^{N-1}\leq \int_{J_u} \gamma(x, u^+, u^-, \nu_u)\,d{\cal H}^{N-1}.}
$$

\noindent Moreover  we have
\begin{equation}\label{newterm}
\limsup_{n \to +\infty} \int_\Omega C\varphi(x, \phi_n(u),v)\,dx= \int_\Omega C\varphi(x,u,v)\,dx
\end{equation}
Indeed, as already observed in step 2, $C\varphi(x,\cdot, v)$ is upper semicontinuous and $\phi_n(u)$ is pointwise converging to $u$ and thus  we can apply the inverted Fatou's lemma.

\noindent For what concerns the other terms, setting $\Omega_n:= \left\{x \in \Omega\setminus J_u: |u(x)|\leq n\right\}$, we have
$$
\begin{array}{ll}
\displaystyle{\limsup_{n\to +\infty}\int_\Omega W(x,\phi_n(u), \nabla \phi_n(u))\,dx=}\\
\\
\displaystyle{=\limsup_{n\to +\infty}\left[\int_{\Omega_n}W(x, \phi_n(u), \nabla \phi_n(u))\,dx + \int_{(\Omega \setminus \Omega_n)\setminus J_u}W(x, \phi_n(u),\nabla \phi_n(u))\,dx \right]}\\
\\
\displaystyle{\leq \int_\Omega W(x, u, \nabla u)\,dx + \limsup_{n \to +\infty}C\left[|\Omega \setminus \Omega_n| + |D\phi_n(u)|((\Omega \setminus \Omega_n)\setminus J_u)\right].}
\end{array}
$$
On the other hand by \eqref{5.23FM} we deduce that
$$
\limsup_{n \to +\infty}|D\phi_n(u)|((\Omega \setminus \Omega_n)\setminus J_u)\leq \limsup_{n \to +\infty}|Du|(\Omega \setminus(\Omega_n \cup J_u))=0
$$
and so
$$
\limsup_{n \to +\infty} \int_\Omega W(x, \phi_n(u), \nabla \phi_n(u))\,dx \leq \int_\Omega W(x, u, \nabla u)\,dx.
$$
Similarly
$$
\begin{array}{ll}
\displaystyle{\limsup_{n \to +\infty}\int_\Omega W^\infty\left(x, \phi_n(u), \frac{d D^c \phi_n(u)}{d |D^c \phi_n(u)|}\right)d |D^c \phi_n(u)|=}\\
\\
\displaystyle{=\limsup_{n \to +\infty}\int_{\Omega_n}W^\infty\left(x, \phi_n(u), \frac{d D^c \phi_n(u)}{d |D^c \phi_n(u)|}\right) d |D^c \phi_n(u)|+}\\
\\
\quad\displaystyle{+\limsup_{n \to +\infty}\int_{(\Omega \setminus \Omega_n) \setminus J_u}W^\infty\left(x, \phi_n (u), \frac{d D^c\phi_n(u)}{d |D^c \phi_n(u)|}\right)d |D^c \phi_n(u)|\leq} \\
\\
\displaystyle{\leq \int_\Omega W^\infty\left(x ,u, \frac{d D^c u}{d |D^c u|}\right) d |D^c u|+ C \limsup_{n \to +\infty} [|D\phi_n (u)|((\Omega \setminus \Omega_n)\setminus J_u) ] =}\\
\\
\displaystyle{=\int_\Omega W^\infty\left(x, u,\frac{d D^c u}{d |D^c u|}\right)d |D^c u|.}
\end{array}
$$
This finishes the proof.

\medskip

\noindent {\sl Part 2: $q=1$.}

{\sl Lower bound.} Let $u \in BV(\Omega;\mathbb R^d) \cap L^p(\Omega;\mathbb R^d)$, $v \in L^1(\Omega;\mathbb R^m)$, $u_n \in BV(\Omega;\mathbb R^d)$ and $v_n \in L^1(\Omega;\mathbb R^m)$ such $u_n \to u$ strongly in $L^1$ and $v_n \rightharpoonup v$ in $L^1$. Then by Lemma \ref{FMnonquasiconvex} exactly as in the case $q>1$, \eqref{eq1.1.1} continues to hold. Moreover \cite[Theorem 1.1]{BFL}, ensures that
$$
\int_\Omega C \varphi(x, u, v)\,dx \leq \liminf_{n \to + \infty} \int_\Omega \varphi(x, u_n, v_n)\,dx.
$$
Again the lower bound follows from the superadditivity of the liminf.

{\sl Upper bound.}  Let $u \in BV(\Omega;\mathbb R^d)\cap L^p(\Omega;\mathbb R^d)$ and $v \in L^1(\Omega;\mathbb R^m)$. We aim to prove \eqref{upperboundforever},  constructing convenient sequences $u_n \in BV(\Omega;\mathbb R^d)$ and $v_n \in L^1(\Omega;\mathbb R^m)$ with $u_n \to u$ in $L^1$ and $v_n \rightharpoonup v$ in $L^1$.

\noindent{\sl Case 1.} As in the case $q>1$ we first assume that $u \in L^\infty(\Omega;\mathbb R^d)$ and develop our proof in three steps.

\noindent{\sl Case 1, step 1.} The step 1 is identical to Case 1, step 1 proven for $q>1$.

\noindent{\sl Case 1, step 2.} For what concerns  this step, we preliminarly consider a continuous increasing function $\theta:[0, + \infty) \to [0, + \infty)$ such that
\begin{equation}\label{uperlineargrowth}
\displaystyle{\lim_{t \to +\infty} \frac{\theta(t)}{t}= +\infty}.
\end{equation}
Then consider a decreasing sequence $\varepsilon \to 0$ and take the functional $I_{\varepsilon}:BV(\Omega;\mathbb R^d)\times L^1(\Omega;\mathbb R^m)\to \mathbb R \cup \{+\infty\}$, defined as
\begin{equation}\label{Iepsilon}
I_\varepsilon(u,v):= I(u,v)+ \varepsilon \int_\Omega \theta(|v|)\,dx.
\end{equation}
Let $C (\varphi(x,u,\cdot) + \varepsilon \theta(|\cdot|))$ be the convexification of $\varphi(x,u,\cdot) + \varepsilon \theta(|\cdot|)$ as in \eqref{cx-rel}.

By \cite[Theorem 6.68 and Remark 6.69]{Fonseca-Leoni}, we have that for every $w \in L^1(\Omega;\mathbb R^m)$
$$
\int_\Omega C(\varphi(x,w,v) + \varepsilon \theta(|v|))\,dx =\inf\left\{\liminf_{n \to +\infty}\int_\Omega(\varphi(x,w,v_n)+ \varepsilon \theta(|v_n|))\,dx: v_n \rightharpoonup v \hbox { in }L^1\right\}
$$
whenever the second term is finite.  Moreover the left hand side coincides with the sequentially weakly-$L^1$ lower semicontinuous envelope. Consequently for every $n \in \mathbb N$, let $\overline{u}_n$ be the sequence constructed in {\sl Case 1, step 1} and let $v_j^n \in L^1(\Omega;\mathbb R^m)$ be such that $v_j^n \rightharpoonup v \hbox{ in }L^1 \hbox{ as } j \to +\infty$ and
$$\int_\Omega C(\varphi(x,\overline{u}_n, v)+ \varepsilon \theta(|v|))\,dx=\lim_{j \to +\infty}\int_\Omega(\varphi(x, \overline{u}_n, v_j^n)+ \varepsilon \theta(|v_j^n|))\,dx.$$
The proof now develops as in \cite[Proposition 3.18]{Fonseca-Leoni}. The growth condition $(v)$ and the fact that $\overline{u_n}$ is bounded in $L^\infty$ and thus in $L^1$, entails that there exists a constant $M$ such that
\begin{equation}\label{3.11FLbook}
\displaystyle{\sup_{n ,j \in \mathbb N} \int_\Omega \theta(|v_j^n|)\,dx \leq M.}
\end{equation}
We observe that the growth conditions on $\theta$ guarantee that $\sup_{n, j \in \mathbb N}\|v_j^n\|_{L^1(\Omega)} \leq C(M)$. Moreover the separability of $C_0(\Omega)$ allows us to consider a dense sequence of functions $\{\psi_l\}$.

Next, mimicking the argument used in the analogous step for $q>1$, for every $\varepsilon >0$ we construct a diagonalizing sequence $v_n$ as follows.
For each $n \in \mathbb N$ consider $j(n)$ increasing and such that
$$
\begin{array}{ll}
\displaystyle{\left|\int_\Omega \left(\varphi(x, \overline{u}_n, v^n_{j(n)}) + \varepsilon \theta(|v^n_{j(n)}|) - C(\varphi(x, \overline{u}_n, v) + \varepsilon \theta(|v|))\right) dx \right| \leq \frac{1}{n}}\\
\\
\displaystyle{\left|\int_\Omega (v^n_{j(n)}- v)\psi_l\, dx \right| \leq \frac{1}{n},\, l= 1,\dots, n.}
\end{array}
$$
Define $v_n:= v^n_{j(n)}$. The bounds on $\theta$, the fact that $\overline{u}_n$ is bounded in $L^1$ and the separability of $C_0(\Omega)$ guarantee that $v_n \weakstar v$ in ${\cal M}(\Omega)$ and moreover, \eqref{3.11FLbook}, Dunford-Pettis' theorem entail that the convergence of $v_n$ towards $v$ is weak-$L^1$.

\noindent{\sl Case 1, step 3.} Arguing as in the first part of {\sl Case 1, step 3} for $q>1$, we can prove that
$$
\displaystyle{\limsup_{n \to +\infty} \int_\Omega \left(\varphi(x, \overline{u}_n, v_n) +\varepsilon \theta(|v_n|)\right)\,dx \leq \int_\Omega C(\varphi(x,u,v)+ \varepsilon \theta(|v|))\,dx.}
$$
Next we define
\begin{equation}\label{Iepsilonbar}
\displaystyle{\overline{I}_\varepsilon(u,v):=\inf\{\liminf_{n \to +\infty} I_\varepsilon(u_n, v_n): (u_n, v_n) \in BV(\Omega;\mathbb R^d)\times L^1(\Omega;\mathbb R^m), u_n \to u \hbox{ in }L^1, v_n \rightharpoonup v \hbox{ in }L^1\}.}
\end{equation}
The same argument of the last part in {\sl Case 1, step 3}, for $q>1$, allows to prove that
\begin{equation}\label{Lebmonotoneupperb}
\begin{array}{ll}
\displaystyle{\overline{I}_\varepsilon(u,v) }& \displaystyle{\leq \int_\Omega W(x,u,\nabla u)\,dx + \int_{J_u} \gamma(x, u^+, u^-, \nu_u)\,d{\cal H}^{N-1} +}\vspace{0.2cm}
\\
&\quad\displaystyle{+\int_\Omega W^\infty\left(x, u, \frac{d D^c u}{d|D^c u|}\right)d |D^c u|+ \int_\Omega C\left(\varphi(x, u,v)+ \varepsilon \theta(|v|)\right)\,dx}
\end{array}
\end{equation}
for every $u \in BV(\Omega;\mathbb R^d) \cap L^\infty(\Omega;\mathbb R^d)$ and $v \in L^1(\Omega;\mathbb R^m)$.
On the other hand we observe that the sequence $\overline{I}_\varepsilon(u,v)$ is increasing in $\varepsilon$  and $\overline{I} \leq \overline{I}_\varepsilon$ for every $\varepsilon$.
Moreover by virtue of the increasing behaviour in $\varepsilon$ of $\varphi + \varepsilon \theta$, invoking \cite[Proposition 4.100]{Fonseca-Leoni} it results that for every $(x,u)\in \Omega \times \mathbb R^d$, we have
$$
\displaystyle{ \inf_\varepsilon C\left(\varphi(x,u,v)+ \varepsilon \theta(|v|) \right) = \lim_{\varepsilon \to 0} C\left(\varphi(x,u,v)+ \varepsilon \theta(|v|) \right) = C \varphi(x,u,v) .}
$$
Thus applying Lebesgue monotone convergence theorem we have
\begin{equation}\label{3.15bis}
\begin{array}{ll}
\displaystyle{\overline{I}(u,v)\leq \lim_{\varepsilon \to 0}\overline{I}_\varepsilon(u,v)= \lim_{\varepsilon \to 0}\left(\int_\Omega W(x,u,\nabla u)\,dx +\int_{J_u}\gamma(x,u^+,u^-, \nu_u)\,d {\cal H}^{N-1}+ \right.}
\\
\\
\displaystyle{+\left.\int_\Omega W^\infty\left(x,u,\frac{d D^c u}{d |D^c u|}\right)d |D^c u|+
\int_\Omega C\left(\varphi(x,u,v)+ \varepsilon \theta(|v|)\right)\,dx\right)=}\\
\\
\displaystyle{ =\int_\Omega W(x,u,\nabla u)\,dx + \int_{J_u}\gamma(x, u^+, u^-, \nu_u)d {\cal H}^{N-1} + \int_\Omega W^\infty\left(x, u,\frac{d D^c u}{d |D^c u|}\right)d |D^c u|+ \int_\Omega C\varphi(x,u,v)\,dx,}
\end{array}
\end{equation}
for every $(u,v)\in (BV(\Omega;\mathbb R^d)\cap L^\infty(\Omega;\mathbb R^d))\times L^1(\Omega;\mathbb R^m)$.

\medskip

\noindent{\sl Case 2.} Now we consider $u \in BV(\Omega;\mathbb R^d)\cap L^p(\Omega;\mathbb R^d)$ and $v \in L^1(\Omega;\mathbb R^m)$.

To achieve the upper bound we can preliminarly observe that, a proof entirely similar to \cite[Proposition 3.18]{Fonseca-Leoni}, guarantees that for every $\varepsilon >0$, the functional $\overline{I}_\varepsilon(u,v)$, defined in \eqref{Iepsilonbar} is sequentially weakly lower semicontinuous with respect to the topology $L^1(\Omega;\mathbb R^d)_{\rm strong}\times L^1(\Omega;\mathbb R^m)_{\rm weak}$.
Thus, arguing exactly as in the {\sl Case 2}, for $q>1$, we have that
\begin{equation}\label{upperboundIepsilon}
\begin{array}{ll}
\displaystyle{\overline{I}_\varepsilon(u,v)} &\displaystyle{\leq \int_\Omega W(x,u,\nabla u)\,dx + \int_{J_u}\gamma(x,u^+, u^-, \nu_u)\,d{\cal H}^{N-1}+ \int_\Omega W^\infty\left(x,u, \frac{d D^c u}{d |D^c u|}\right)d |D^c u| +}\\
\\
&\displaystyle{+\int_\Omega C(\varphi(x,u, v) + \varepsilon \theta(|v|))\,dx.}
\end{array}
\end{equation}

Finally the monotonicity argument for $\varepsilon$ invoked in the {\sl Case 1, step 3} for $q=1$ can be recalled also in this context leading to the same inequality in \eqref{3.15bis} for every $u \in BV(\Omega;\mathbb R^d)\cap L^p(\Omega;\mathbb R^d)$ and for every $v \in L^1(\Omega;\mathbb R^m)$, and that concludes the proof of \eqref{upperboundforever}.
\end{proof}

Now we present the proof of Theorem \ref{thmdec2}, which is much easier than the latter one, since, by virtue of the continuous embedding of $BV(\Omega;\mathbb R^d)$ in $L^\frac{N}{N-1}(\Omega;\mathbb R^d)$, it does not involve any truncature argument.

\begin{proof}[Proof of Theorem \ref{thmdec2}]
We omit the details of the proof since it develops in the same way as that of Theorem \ref{thmdec1}.
First we invoke Corollary \ref{Corollary3.2} and assume without loss of generality that $W$ is quasiconvex in the last variable. Then we prove a lower bound for the relaxed energy and finally we show that the lower bound is also an upper bound.
\medskip
As in Theorem \ref{thmdec1} we may consider two separate cases: $q>1$ and $q=1$.

\noindent {\sl Lower bound for the cases $q=1$ and $q>1$.} The proof of the lower bound is identical to that of Theorem \ref{thmdec1}.

%Indeed, despite we consider now an explicit dependence on $u$ in the function $W$, the %hypotheses allow to apply Lemma \ref{FMnonquasiconvex} and get the equivalent inequality to %(\ref{eq1.1.1}).
\medskip

\noindent{\sl Upper bound, case $q>1$.} Let $u\in BV(\O;\mathbb{R}^d)$ and $v\in L^q(\O;\mathbb{R}^m)$. We can assume \begin{equation}\label{finite-assumptiondec}
\begin{array}{ll}
\displaystyle{\int_\O W(x,u,\nabla u)\,dx + \int_{J_u} \gamma(x, u^+, u^-, \nu_u)\,d {\cal H}^{N-1}+\int_\O W^\infty\left(x,u,\frac{d D^c u}{d|D^cu|}\right)d |D^c u| + }\vspace{0.2cm}\\
\displaystyle{+\int_\O C \varphi(x,u,v)\,dx<+\infty.}
\end{array}
\end{equation}

Without loss of generality, we assume also that $W$ and $\varphi\ge 0$. Applying  Lemma \cite[Theorem 2.16]{Fonseca-Muller-relaxation}, we can get a sequence $\{u_n\}$ in $W^{1,1}(\O;\mathbb{R}^d)$ such that $u_n \to u$ in $L^1$ and
$$
\begin{array}{ll}
\displaystyle{\lim \int_\O W(x,u_n,\nabla u_n)\,dx= \int_\O W(x,u,\nabla u)\,dx + \int_{J_u} \gamma(x,u^+,u^-,\nu_u)\,d {\cal H}^{N-1}+}\vspace{0.2cm}\\
\hspace{4.2cm}
\displaystyle{+\int_\O W^\infty\left(x,u, \frac{d D^c u}{d|D^cu|}\right)d |D^c u|.}
\end{array}
$$

We observe that, by the coercivity condition on $W$ and by (\ref{finite-assumptiondec}), $\nabla u_n$ is bounded in $L^1$. Moreover, the continuous embedding of $BV(\Omega;\mathbb{R}^d)$ in $L^{\frac{N}{N-1}}(\Omega;\mathbb{R}^d)$, imply that $u_n$ is bounded in $L^\frac{N}{N-1}(\Omega;\mathbb{R}^d)$ and thus in  $L^p(\Omega;\mathbb{R}^d)$ since we are assuming $1\le p\le \frac{N}{N-1}$.

Then, as in the proof of Theorem \ref{thmdec1}, {\sl Case 1, step 2}, $q>1$ we can construct a recovery sequence $v_n$ using the relaxation theorem  \cite[Theorem 6.68]{Fonseca-Leoni} and the same diagonalizing argument.
We emphasize that there is no need to make a preliminary truncature of the recovery sequence $u_n$.
Indeed, to ensure that $v_n$ is bounded in $L^q(\Omega;\mathbb{R}^m)$ (required to obtain the weak convergence of $v_n$ towards $v$ in $L^q$) it suffices to use the growth condition of $\varphi$ and the fact that $u_n$ is bounded in $L^p$.

Therefore
it is possible to get $v_n \weak v$ in $L^q$ and such that
$$\limsup_{n \to + \infty} \int_\O \varphi(x,u_n,v_n)\,dx\le\int_\O C \varphi(x,u,v)\,dx.$$
 The upper bound then follows by the sub-additivity of the limsup.

\medskip
\noindent{Upper bound, case $q=1$.} In analogy with the case $q>1$ there is no need of truncature because of the continuous embedding of $BV$ in $L^{\frac{N}{N-1}}$. As for Theorem \ref{thmdec1} it suffices to approximate the functional $I$ by $I_\varepsilon$ in \eqref{Iepsilon} and consequently it is enough to use, for the correspective relaxed functional, the diagonalization argument adopted in Theorem \ref{thmdec1}, Case 1, step 2 for $q=1$ via an application of Dunford-Pettis' theorem. Finally the monotonicity behaviour in $\varepsilon$ of $\overline{I}_\varepsilon$, the approximation of the energy densities allowed by \cite[Proposition 4.100]{Fonseca-Leoni} and the Lebegue monotone convergence theorem conclude the proof.
\end{proof}
\bigskip

{\bf Acknowledgements} The authors would like to thank Irene Fonseca and Giovanni Leoni for their invaluable comments on the subject of this paper and acknowledge the  Center for Nonlinear Analysis, Carnegie Mellon University, Pittsburgh (PA) for the kind hospitality and support.

A.M. Ribeiro thanks Universit\`{a} di Salerno and E. Zappale thanks Universidade Nova de Lisboa for their support and hospitality.

The research of A. M. Ribeiro was partially supported by Funda\c c\~ao para a Ci\^{e}ncia e Tecnologia (Portuguese Foundation for Science and Technology) through  the Pos-Doc Grant SFRH/BPD/34477/2006, Financiamento Base 2010-ISFL/1/297 from FCT/MCTES/PT. The research of both authors has been partially supported by  Funda\c c\~ao para a Ci\^{e}ncia e Tecnologia (Portuguese Foundation for Science and Technology) through
UTA-CMU/MAT/0005/2009 and PTDC/MAT/109973/2009.

\end{document}